\documentclass[a4paper,12pt]{article}
\usepackage[english]{babel}
\usepackage[T1]{fontenc}
\usepackage[utf8]{inputenc}
\usepackage{lmodern}

\usepackage{amsthm,amsmath,amsfonts}

\usepackage{bm}
\usepackage{natbib}
\usepackage{enumitem}
\usepackage{mathtools}
\usepackage{prodint}
\usepackage{booktabs}
\usepackage{tikz}

\newcounter{maincounter}
\newtheorem{proposition}[maincounter]{Proposition}
\newtheorem{lemma}[maincounter]{Lemma}

\newcommand{\given}{\mathbin{\mid}}
\newcommand{\R}{\mathbb{R}}
\renewcommand{\d}{\hspace{1pt} \mathrm{d}}
\newcommand{\F}{\mathcal{F}}
\newcommand{\J}{\mathcal{J}}
\renewcommand{\P}{\operatorname{P}}
\newcommand{\E}{\operatorname{E}}
\newcommand{\filter}[1]{\mathcal{#1}}
\newcommand{\sigAlg}[1]{\mathcal{#1}}
\newcommand\independent{\protect\mathpalette{\protect\independenT}{\perp}}
\def\independenT#1#2{\mathrel{\rlap{$#1#2$}\mkern2mu{#1#2}}}
\newcommand{\indic}[1]{\mathbf{1}(#1)}
\newcommand{\mat}[1]{\mathbf{#1}}
\newcommand{\identmat}{\mat{I}}
\newcommand{\from}{:}

\title{On the assumption of independent right censoring}
\author{Morten Overgaard$^1$ and Stefan Nygaard Hansen$^2$}
\date{%
    \small Department of Public Health, Aarhus University \\
    \small Bartholins All\'{e} 2 - Building 1260, DK-8000 Aarhus C, Aarhus, Denmark\\[2ex]
    \small $^1$moov@ph.au.dk\\%
    \small $^2$stefanh@ph.au.dk\\[2ex]%
    May 7, 2019
}

\begin{document}

\maketitle

\begin{abstract}
Various assumptions on a right-censoring mechanism to ensure consistency of the Kaplan--Meier and Aalen--Johansen estimators in a competing risks setting are studied. 
Specifically, eight different assumptions are seen to fall in two categories: a weaker identifiability assumption, which is the weakest possible assumption in a precise sense, and a stronger representativity assumption which ensures the existence of an independent censoring time. 
When a given censoring time is considered, similar assumptions can be made on the censoring time. This allows for a characterization of so-called pointwise independence as well as full independence of censoring time and event time and type.
Examples illustrate how the various assumptions differ.

\vspace{1em}\noindent\textbf{Keywords:} Censoring; competing risks; consistency; identifiability; product integral; representativity.
\end{abstract}

\section{Introduction}

When dealing with right censoring in survival analysis, assumptions on the censoring mechanism are inevitably needed in order to bridge the gap between the observable world and the underlying world of interest. Many seemingly different assumptions have been proposed in the literature. The papers of \cite{williams1977models}, \cite{kalbfleisch1979constant}, and \cite{lagakos1979general} did clarify connections between some of the different assumptions. Since then, martingale theory has become a much used tool in survival analysis, and assumptions on the censoring mechanism are made by means of martingale assumptions in, for instance, \cite{aalen1978empirical}, \cite{Gill1980}, and \cite{Andersen1993}. A clear overview and comparison of the various assumptions does, however, seem to be lacking.

The purpose of this paper is to provide a clear overview and a comparison of various assumptions on the censoring mechanism made in order to ensure consistency of estimators such as the Kaplan--Meier and Aalen--Johansen estimators. This is done in a competing risks setting and without assuming absolute continuity of the involved random variables. Along the way, we obtain assumptions that are minimal in a precise sense for ensuring this consistency. We also make clear that important differences exist between considering a given, underlying censoring time producing right censoring and not considering such a censoring time.

Additionally, the use of product integrals and the techniques used in the many proofs might, in itself, be of interest to researchers in the field of theoretical survival analysis. In particular, the appendix provides a wealth of technical results that may be useful in other settings as well. 

The paper is structured as follows. In Section~\ref{sec:given_event_time}, right censoring in a general form is studied and minimal conditions to ensure consistency of the Kaplan--Meier and Aalen--Johansen estimators are obtained. Various assumptions from the literature are discussed and it is shown that they only correspond to two nested properties: an identifiability assumption and a representativity assumption -- the latter being the strongest. Section~\ref{sec:given_censoring_time} concerns the setting where an explicit censoring time is given. We discuss assumptions on the censoring mechanism and show that independence of the event and censoring times is equivalent to representativity assumptions on both the event and censoring time. In Section~\ref{sec:examples} we treat two examples in order to show that the representativity assumption is strictly stronger than the identifiability assumption and to illustrate the assumptions in a practical setting. Finally, in Section~\ref{sec:discussion} we discuss some of the perspectives of the paper.

\section{A censored event time}
\label{sec:given_event_time}

Consider an event time $T>0$ and event type $D\in\{1,\ldots,d\}$ that are subject to right censoring meaning that we are only able to observe a $\tilde T>0$ with $\tilde T\leq T$ and an indicator $\tilde D=D\indic{\tilde T=T}$ with values in $\{0,\ldots,d\}$ where 0 indicates a censoring.
These are all considered proper random variables, that is, with $\P(\tilde T < \infty) = \P(T < \infty) = 1$.
We will refer to $\tilde T$ and $\tilde D$ as the observed exit time and exit type, respectively, because the risk set is exited at time $\tilde T$ and $\tilde D$ states how. 
This setting does not involve an explicit, underlying censoring time and may be useful in certain practical settings where such a censoring time is difficult to define.
A setting with a given censoring time is dealt with in the next section.

For the pair $(T,D)$ of interest we define the survival function $S(t)=\P(T>t)$, the cause-specific cumulative incidence functions $F_j(t)=\P(T\leq t,D=j)$ for $j=1,\ldots,d$ and the cause-specific cumulative hazard functions $H_j(t)=\int_0^tS(s-)^{-1} F_j(\d s)$ for $j=1,\ldots,d$. We define the corresponding functions for the observed pair $(\tilde T,\tilde D)$, that is, $\tilde S(t)=\P(\tilde T>t)$, $\tilde F_j(t)=\P(\tilde T\leq t,\tilde D=j)$ and $\tilde H_j(t)=\int_0^t\tilde S(s-)^{-1} \tilde F_j(\d s)$ for $j=0,\ldots,d$. Both $H_j$ and $\tilde H_j$ are well-defined functions from $[0,\infty)$ into $[0,\infty]$ for $j=1, \dots, d$. Here and in the following, division by 0 can be interpreted as 0 or any arbitrary number since it only occurs in integrals on a null set of the integrator. 
Frequently, a restriction to the interval $\J = \{t \in [0, \infty) \given \tilde S(t) > 0\}$ is relevant since we will never observe an exit time beyond $\J$. Let $\tau$ denote $\sup\{t > 0 : \tilde S(t) > 0\}$ and note that either $\J = [0, \tau)$, when $\tilde S(\tau-) = 0$, or $\J = [0, \tau]$, when $\tilde S(\tau-) > 0$.

In this section we shall study the assumptions under which we can identify $S$ and $F_j$ by the Kaplan--Meier and Aalen--Johansen estimators defined in Appendix~2. To this end, let $\mat{P}$ denote the $(d+1)\times (d+1)$ matrix of transition probabilities
\begin{align*}
    \mat{P}(s,t)=\begin{pmatrix} S(t\given s) & F_1(t\given s) & \cdots & F_d(t\given s)\\
    0 & 1 & \cdots & 0 \\
    \vdots & \vdots & \ddots & \\
    0 & 0 & \cdots & 1\end{pmatrix}
\end{align*}
where $S(t\given s)= \P(T > t \given T > s) = S(t)/S(s)$ and $F_j(t\given s)= \P(T \leq t, D = j \given T > s) = (F_j(t)-F_j(s))/S(s)$ for $j=1,\ldots,d$ and $t\geq s$. With a slight abuse of notation, we let $\mat{P}(t)=\mat{P}(0,t)$ which is the matrix of interest. If $H(t)=\sum_{j=1}^d H_j(t)$ denotes the all-cause cumulative hazard function and $\tilde H(t)=\sum_{j=1}^d \tilde H_j(t)$ denotes the observed counterpart, then we define the two $(d+1)\times (d+1)$ matrices
\begin{align}
    \label{eq:h_matrix}
    \mat{H}(t)=\begin{pmatrix}-H(t) & H_1(t) & \cdots & H_d(t)\\
    0 & 0 & \cdots & 0 \\
    \vdots & \vdots & \ddots & \vdots \\
    0 & 0 & \cdots & 0\end{pmatrix}, \quad 
    \tilde{\mat{H}}(t)=\begin{pmatrix}-\tilde H(t) & \tilde H_1(t) & \cdots & \tilde H_d(t) \\
    0 & 0 & \cdots & 0 \\
    \vdots & \vdots & \ddots & \vdots\\
    0 & 0 & \cdots & 0\end{pmatrix}
\end{align}
and again, with slight abuse of notation, we let $\mat{H}(s,t)=\mat{H}(t)-\mat{H}(s)$ and $\tilde{\mat{H}}(s,t)=\tilde{\mat{H}}(t)-\tilde{\mat{H}}(s)$ for $t\geq s$.

According to \eqref{eq:limit_aj} of the appendix, the Aalen--Johansen estimator $\hat{\mat{P}}_n(t)$, defined in \eqref{eq:def_aj} of the appendix, is consistent for $\prodi_0^t(\identmat+\tilde{\mat{H}}(\d s))$ for any $t\in\J$ in a setting with independent and identically distributed observations. We now have the following result.

\begin{proposition}
\label{prop:consistency}
In a setting with $n$ independent and identically distributed observations, the Aalen--Johansen estimator $\hat{\mat{P}}_n(t)$ consistently estimates $\mat{P}(t)$ for all $t \in \J$ if and only if $\tilde{\mat{H}}(t)=\mat{H}(t)$ for all $t\in\J$.
In other words, the Aalen--Johansen estimator of $F_j(t)$ is consistent for all $t \in \J$ for $j=1, \dots, d$ if and only if $\tilde H_j(t) = H_j(t)$ for all $t \in \J$ for $j=1, \dots, d$.
\end{proposition}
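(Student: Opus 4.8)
The plan is to reduce the claim to a deterministic identity between product integrals and then to exploit the injectivity of product integration. By \eqref{eq:limit_aj}, for each fixed $t \in \J$ the estimator $\hat{\mat P}_n(t)$ converges in probability to the deterministic matrix $\prodi_0^t(\identmat + \tilde{\mat H}(\d s))$. Since a limit in probability is unique, $\hat{\mat P}_n(t)$ is consistent for $\mat P(t)$ for all $t \in \J$ if and only if
\[
    \prodi_0^t(\identmat + \tilde{\mat H}(\d s)) = \mat P(t) \qquad \text{for all } t \in \J ,
\]
so it suffices to show that this identity is equivalent to $\tilde{\mat H}(t) = \mat H(t)$ for all $t \in \J$.

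I would first record the product-integral representation $\mat P(t) = \prodi_0^t(\identmat + \mat H(\d s))$, which is valid with no absolute-continuity assumption: it amounts to the well-known identities $S(t) = \prodi_0^t(1 - H(\d s))$ and $F_j(t) = \int_0^t S(s-)\, H_j(\d s)$ together with standard properties of product integrals of matrices of the form of $\mat H$ (all available in the appendix). Given this, the implication from $\tilde{\mat H} = \mat H$ on $\J$ to consistency is immediate, since the product integral over $[0,t]$ depends only on the restriction of the integrator to $[0,t] \subseteq \J$, whence $\prodi_0^t(\identmat + \tilde{\mat H}(\d s)) = \prodi_0^t(\identmat + \mat H(\d s)) = \mat P(t)$.

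For the converse I would use that product integration is injective on the class of integrators that vanish at $0$ and whose product integral stays invertible: if $\mat Y(t) = \prodi_0^t(\identmat + \mat A(\d s))$ with $\mat A(0) = \mat 0$ and $\mat Y(s-)$ invertible for $s \in (0,t]$, then $\mat A$ is recovered as $\mat A(t) = \int_0^t \mat Y(s-)^{-1}\, \mat Y(\d s)$. Both $\tilde{\mat H}$ and $\mat H$ vanish at $0$ because $\P(T > 0) = \P(\tilde T > 0) = 1$, and if the displayed identity holds then both integrators share the product integral $\mat P$; moreover $\det \mat P(s-) = S(s-) \ge S(s) \ge \tilde S(s) > 0$ for $s \in \J$ (using $\tilde T \le T$), so $\mat P(s-)$ is invertible throughout $\J$. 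Applying the inversion formula to each integrator then yields $\tilde{\mat H}(t) = \int_0^t \mat P(s-)^{-1}\, \mat P(\d s) = \mat H(t)$ for every $t \in \J$. I expect this to be the main obstacle: establishing, or locating in the appendix, the injectivity of the product-integral map on $\J$, which rests on the non-vanishing of $S$ on $\J$ and on the product-integration machinery (well-definedness of the product integral on $\J$ and the inversion formula).

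Finally, the ``in other words'' reformulation is bookkeeping with the entries of these $(d+1) \times (d+1)$ matrices. Comparing the $(1, j+1)$ entries, $\tilde{\mat H} = \mat H$ on $\J$ holds exactly when $\tilde H_j = H_j$ on $\J$ for $j = 1, \dots, d$, the $(1,1)$ entry $-\tilde H = -\sum_{j=1}^d \tilde H_j$ then matching automatically; and $\hat{\mat P}_n$ is consistent for $\mat P$ exactly when every nontrivial entry is, because the first row of $\hat{\mat P}_n$ carries the Aalen--Johansen estimators of $F_1, \dots, F_d$ and, in the $(1,1)$ slot, the Kaplan--Meier estimator, which equals one minus the sum of those Aalen--Johansen estimators since the rows of $\identmat + \hat{\tilde{\mat H}}_n$ sum to one and this is preserved by product integration -- mirroring $S = 1 - \sum_{j=1}^d F_j$.
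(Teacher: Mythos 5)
Your proposal is correct and follows essentially the same route as the paper: reduce via \eqref{eq:limit_aj} to the deterministic identity $\prodi_0^t(\identmat+\tilde{\mat H}(\d s))=\mat P(t)$ on $\J$, identify $\mat P(t)=\prodi_0^t(\identmat+\mat H(\d s))$ through the forward-equation characterization (Lemma~\ref{lemma:prodi_dH}), and conclude by the injectivity of product integration on $\J$. Your explicit inversion formula $\mat A(t)=\int_0^t \mat Y(s-)^{-1}\mat Y(\d s)$, justified by $\det\mat P(s-)=S(s-)\geq\tilde S(s)>0$, is exactly the content of Theorem~3 of Gill and Johansen (1990) that the paper cites for this step.
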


\begin{proof}
By uniqueness of the product integral, we immediately have $\mat{H}(t)=\tilde{\mat{H}}(t)$ for all $t\in\J$ if and only if $\prodi_0^t(\identmat+\mat{H}(\d s))=\prodi_0^t(\identmat+\tilde{\mat{H}}(\d s))$ for all $t\in \J$. This is due to Theorem~3 of \cite{Gill1990} since both $\mat{H}$ and $\tilde{\mat{H}}$ are seen to be of bounded variation on $[0,t]$ for any $t \in \J$. Now, $\mat{P}$ is seen to satisfy the requirements of Lemma~\ref{lemma:prodi_dH} of the appendix by definition of $H_j$ from which it follows that $\mat{P}(t)=\prodi_0^t (\identmat+\mat{H}(\d s))$. This establishes the equivalence.
\end{proof}

A similar argument reveals that the Kaplan--Meier estimator $\hat S_n(t)$ from \eqref{eq:kaplanmeier} in the appendix consistently estimates $S(t)$ for all $t \in \J$ if and only if $\tilde H(t) = H(t)$ for all $t \in \J$.

We call the property of Proposition~\ref{prop:consistency} the property of \emph{identity of forces of mortality} with inspiration from \cite{elandt1976conditional}.
An assumption of identity of forces of mortality is, for instance, used by \cite{gail1975review} in a competing risks setting as a weaker substitute for the assumption of independent latent event times.

\cite{williams1977models} study, in a setting without competing risks, the constant-sum assumption as a weaker alternative to the assumption of independence of event time and censoring time. 
Let $a_j$ be the function, unique up to $F_j$-null sets, given by $a_j(t) = \P(\tilde T=t,\tilde D=j\given T=t,D=j)$ and let $B(t)=\int_0^{t-}S(s)^{-1} \tilde F_0(\d s)$. In the competing risks setting, the \emph{constant-sum property} can then be phrased as
\begin{equation*}
    a_j(t) + B(t) = 1
\end{equation*}
for $F_j$-almost all $t \in \J$ for $j=1, \dots, d$.
In the paper of \cite{kalbfleisch1979constant}, the authors argue that this property is equivalent to identity of forces of mortality in a setting without competing risks and with a differentiable event hazard function.

Estimators in survival analysis and in the competing risks setting have often been studied using martingale theory, for instance in \cite{aalen1978empirical}, \cite{Gill1980}, \cite{jacobsen1989right}, and \cite{Andersen1993}. In such a setting, the following martingale property, which we will call \emph{the weak martingale property} in light of stronger properties introduced later on, has been shown to ensure the desired consistency of estimators. Let $\tilde N_j(t) = \indic{\tilde T \leq t, \tilde D = j}$ for $j=0, \dots, d$ and $\tilde Y(t) = \indic{\tilde T \geq t}$. The weak martingale property is that the processes given by
\begin{equation*}
    \tilde N_j(t) - \int_0^t \tilde Y(s) H_j(\d s),
\end{equation*}
for $t \geq 0$, for $j=1, \dots, d$ are all martingales with respect to the filtration given by $\tilde \F_t = \sigma(\tilde N_j(s) : j \in \{0, \dots, d\}, s \leq t)$, which models the observed information.
This or similar assumptions are made, for instance, in Assumption~3.1.1 of \cite{Gill1980}, in (2.9) of \cite{jacobsen1989right}, in Definition~3.1.1 of \cite{martinussen2007dynamic}, in (5.5) of \cite{kalbfleisch1980statistical}, and in Theorem~1.3.1 of \cite{fleming1991}.

Recall that $a_j(t) = \P(\tilde T=t,\tilde D=j\given T=t,D=j)$. We consider here yet another property, which we call \emph{status-independent observation}. Status-independent observation is the property that
\begin{equation*}
    a_j(t) = \P(\tilde T \geq t \given T \geq t)
\end{equation*}
for $F_j$-almost all $t \in \J$ for $j=1, \dots, d$, and it is called so because it states that between the statuses of surviving up to a certain time, $T \geq t$, and having some event at that time, $T = t$ with $D =j$, the probability, given a certain status, of that status actually being observed does not depend on the status.

As the following result shows, these four properties are in fact equivalent, and we will refer them collectively as the \emph{identifiability property} in light of Proposition~\ref{prop:consistency}.
\begin{proposition} \label{prop:weak_censoring}
The following properties are equivalent.
\begin{enumerate}[label=\textup{(\ref{prop:weak_censoring}.\arabic*)}]
\item Identity of forces of mortality: $\tilde H_j(t) = H_j(t)$ for $j= 1, \dots, d$ and for any $t \in \J$.  \label{it:Lambda}
\item The weak martingale property: The processes given by $\tilde N_j(t) - \int_0^t \tilde Y(s) H_j(\d s)$, $t \geq 0$, for $j=1, \dots, d$ are all martingales with respect to the filtration $(\tilde \F_t)$, the observed information. \label{it:martingale}
\item Status-independent observation: $a_j(t) = \P(\tilde T \geq t \given T \geq t)$ for $F_j$-almost all $t \in \J$ for $j=1,\ldots,d$. \label{it:cond_prob_given_T}
\item The constant-sum property: $a_j(t)+B(t)=1$ for $F_j$-almost all $t \in \J$ for $j=1, \dots, d$. \label{it:constant_sum}
\end{enumerate} 
\end{proposition}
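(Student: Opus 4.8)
The plan is to establish the equivalences by connecting every property to a single master identity: for $F_j$-almost all $t \in \J$ and each $j = 1, \dots, d$,
\begin{equation*}
    a_j(t)\, S(t-)^{-1}\, F_j(\d t) = \tilde F_j(\d t) \quad \text{and} \quad \tilde S(s-) = \P(\tilde T \geq s) = \P(\tilde T \geq s, T \geq s).
\end{equation*}
The starting observation is that $\tilde F_j(\d t) = \P(\tilde T = t, \tilde D = j) $ decomposes, by the tower property and the definition of $a_j$, as $\tilde F_j(dt) = a_j(t)\, F_j(dt)$ as measures on $\J$ (since $\tilde D = j \neq 0$ forces $\tilde T = T$ and $D = j$). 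Likewise $\P(\tilde T \geq t) = \P(\tilde T \geq t, T \geq t)$ because $\tilde T \leq T$. From here, $\tilde H_j(dt) = \tilde S(t-)^{-1}\tilde F_j(dt) = \tilde S(t-)^{-1} a_j(t)\, F_j(dt)$, whereas $H_j(dt) = S(t-)^{-1} F_j(dt)$.

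First I would prove \ref{it:Lambda} $\Leftrightarrow$ \ref{it:cond_prob_given_T}. Writing both cumulative hazards in differential form via the identity above, $\tilde H_j = H_j$ on $\J$ is equivalent (by uniqueness of the density and the fact that $F_j$ dominates $\tilde F_j$ restricted to $\{\tilde D = j\}$) to $\tilde S(t-)^{-1} a_j(t) = S(t-)^{-1}$ for $F_j$-almost all $t \in \J$, i.e. $a_j(t) = \tilde S(t-)/S(t-) = \P(\tilde T \geq t)/\P(T \geq t) = \P(\tilde T \geq t \mid T \geq t)$, which is exactly status-independent observation. One must be slightly careful that the "for all $t \in \J$" in \ref{it:Lambda} and the "$F_j$-a.e." in \ref{it:cond_prob_given_T} match up: equality of the two cumulative functions on all of $\J$ is equivalent to equality of the associated measures on $\J$, hence to equality of the Radon–Nikodym derivatives $F_j$-a.e., so this is fine.

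Next I would prove \ref{it:cond_prob_given_T} $\Leftrightarrow$ \ref{it:constant_sum}. The link is the elementary decomposition, valid for every $t \in \J$,
\begin{equation*}
    \P(\tilde T \geq t \given T \geq t) = \frac{\P(\tilde T \geq t)}{S(t-)} = \frac{S(t-) - \P(\tilde T < t, T \geq t)}{S(t-)} = 1 - \frac{\P(\tilde T < t \leq T)}{S(t-)},
\end{equation*}
and the observation that $\{\tilde T < t \leq T\}$ up to a $T$-null set is the event of having been censored strictly before $t$, so $\P(\tilde T < t \leq T) = \int_0^{t-} \P(T \geq s \mid \tilde T = s, \tilde D = 0)\,\tilde F_0(ds)$; but on $\{\tilde D = 0\}$ censoring has occurred so $T > \tilde T = s$, giving $\P(\tilde T < t \leq T) = \tilde F_0(t-)$ — wait, more carefully, $\P(\tilde T < t\leq T)$ as a function of $t$ has increments $\P(\tilde T = s, T \geq s, \text{not } T = s) $; since $\tilde D = 0 \Leftrightarrow \tilde T < T$, this contributes $\tilde F_0(ds)$ and one checks $\P(\tilde T < t \leq T)/S(t-)$-type ratio matches $B(t) = \int_0^{t-} S(s)^{-1}\tilde F_0(ds)$ after using that $S$ is right-continuous and the integrator lives where $S(s) = S(s-)$ up to null sets, or by a direct computation. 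Thus $\P(\tilde T \geq t \mid T \geq t) = 1 - B(t)$ for $F_j$-a.e. $t$, and combined with the previous paragraph, $a_j(t) = 1 - B(t)$, which is \ref{it:constant_sum}. The main care here is matching $\P(\tilde T < t \leq T)/S(t-)$ with $B(t) = \int_0^{t-}S(s)^{-1}\tilde F_0(ds)$; this should follow from a Fubini/layer-cake argument or from the fact that both are determined by their increments, and I would isolate it as the technical heart of this step.

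Finally, for \ref{it:Lambda} $\Leftrightarrow$ \ref{it:martingale}, I would argue as follows. The process $M_j(t) = \tilde N_j(t) - \int_0^t \tilde Y(s) H_j(ds)$ is an $(\tilde\F_t)$-martingale if and only if it has mean zero and the right increments; by Doob's optional sampling / the innovation theorem for counting processes in this reduced filtration, $M_j$ is a martingale iff $H_j(ds)$ restricted to where $\tilde Y(s) = 1$ is the $(\tilde\F_t)$-compensator of $\tilde N_j$, which is known to be $\tilde H_j(ds)$ — more concretely, taking expectations, $\E[\tilde N_j(t)] = \tilde F_j(t)$ and $\E[\int_0^t \tilde Y(s) H_j(ds)] = \int_0^t \tilde S(s-) H_j(ds) = \tilde H_j^{H}(t)$ say; the martingale property forces these equal for all $t$, giving $\int_0^t \tilde S(s-)H_j(ds) = \tilde F_j(t) = \int_0^t \tilde S(s-)\tilde H_j(ds)$, hence $H_j(ds) = \tilde H_j(ds)$ on $\J$ (where $\tilde S(s-) > 0$), i.e. \ref{it:Lambda}. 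Conversely, if \ref{it:Lambda} holds then $M_j(t) = \tilde N_j(t) - \int_0^t \tilde Y(s)\tilde H_j(ds)$, which is the Doob–Meyer compensated counting process in the self-exciting filtration $(\tilde\F_t)$ and hence a martingale by the standard counting-process result. I would cite Andersen et al. (1993) or the appendix for the compensator identity.

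The step I expect to be the main obstacle is the second one — pinning down $\P(\tilde T < t \leq T)$ in terms of $\tilde F_0$ and reconciling it with the definition of $B(t)$ — because it is where the bookkeeping about ties ($\tilde T = t = T$ versus $\tilde T = t < T$), the left-limit $S(t-)$ versus $S(t)$, and the half-open integration range in $B$ all interact; the other equivalences are essentially a change of variables between a probability and a Radon–Nikodym derivative, plus an appeal to a standard martingale-compensator fact.
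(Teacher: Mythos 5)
Your equivalences \ref{it:Lambda}$\Leftrightarrow$\ref{it:cond_prob_given_T} and \ref{it:Lambda}$\Leftrightarrow$\ref{it:martingale} are correct and are essentially the paper's own arguments: the first rewrites $\tilde H_j$ via $\tilde F_j(\d t)=a_j(t)F_j(\d t)$ and $\tilde S(t-)=\P(\tilde T\geq t\given T\geq t)S(t-)$, the second takes expectations of the compensated counting process and divides by $\tilde S(s-)>0$ on $\J$ (for the converse direction you should, as the paper does, note explicitly that $\tilde Y$ vanishes and $\tilde N_j$ has no increments outside $\J$ almost surely, so that $H_j$ may be replaced by $\tilde H_j$ inside the integral).

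The step \ref{it:cond_prob_given_T}$\Leftrightarrow$\ref{it:constant_sum} has a genuine gap, and it is not the bookkeeping about ties and left limits that you flag. Your argument rests on the claim that $\P(\tilde T<t\given T\geq t)=B(t)$ is an unconditional identity, provable by Fubini. It is not. Writing $\P(\tilde T<t,\,T\geq t)=\int_{[0,t)}\P(T\geq t\given \tilde T=s,\tilde D=0)\,\tilde F_0(\d s)$ and $S(t-)B(t)=\int_{[0,t)}\P(T\geq t\given T>s)\,\tilde F_0(\d s)$, equality of the two requires that censored individuals be representative of the at-risk population in terms of subsequent survival, which is precisely the kind of condition the proposition is about. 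A concrete counterexample: let $T$ be uniform on $\{1,2,3\}$ and censor at time $1/2$ exactly those with $T=2$; then for $t=5/2$ one has $S(t-)B(t)=\tfrac13\cdot\tfrac13=\tfrac19$ while $\P(\tilde T<t,\,T\geq t)=0$. The paper avoids this by proving two one-sided identities (equations \eqref{eq:prob_vs_B} and \eqref{eq:B_vs_prob} of the appendix, both via the Duhamel equation) that express the discrepancy $\P(\tilde T<t\given T\geq t)-B(t)$ as an integral against $(\tilde H-H)(\d s)$ in one case and against $(1-a_j(s)-B(s))F_j(\d s)$ in the other; the discrepancy therefore vanishes under \ref{it:Lambda} (already known equivalent to \ref{it:cond_prob_given_T}) in one direction and under \ref{it:constant_sum} in the other. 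To repair your proof you must route this step through one of the assumed properties rather than through a purported general identity.
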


\begin{proof}
We consider it well known that $\tilde N_j(t) - \int_0^t \tilde Y(s) \tilde H_j(\d s)$, $t \geq 0$ defines a martingale with respect to  $(\tilde \F_t)$. Under the assumption of \ref{it:Lambda} and since $\tilde Y$ is 0 and there is no increment in $\tilde N_j$ outside $\J$ almost surely, we have that
\begin{equation*}
\begin{aligned}
&\phantom{{}=}\tilde N_j(t) - \int_0^t \tilde Y(s) H_j(\d s) = \int_{(0,t]\cap \J} (\tilde N_j(\d s) - \tilde Y(s) H_j(\d s)) \\
&= \int_{(0,t]\cap \J} (\tilde N_j(\d s) - \tilde Y(s) \tilde H_j(\d s)) = \tilde N_j(t) - \int_0^t \tilde Y(s) \tilde H_j(\d s),
\end{aligned}
\end{equation*}
almost surely for all $t \geq 0$ which yields the result.
On the other hand, assume that \ref{it:martingale} holds. Then, for a given $j \in \{1, \dots, d\}$ and a given $t \in \J$,
\begin{equation*}
\tilde F_j(t) = \E(\tilde N_j(t)) =  \int_0^t \E(\tilde Y(s)) H_j(\d s) = \int_0^t \tilde S(s-) H_j(\d s).
\end{equation*}
Since $\tilde S(s-) > 0$ for $s \leq t$, integrating $\tilde S(s-)^{-1}$ with respect to both sides establishes
\begin{equation*}
H_j(t) = \int_0^t \frac{1}{\tilde S(s-)} \tilde F_j(\d s) = \tilde H_j(t)
\end{equation*}
and this yields \ref{it:Lambda}.

Generally, $\tilde F_j(t) = \int_0^t a_j(s) F_j(\d s)$ and $\tilde S(s-) = \P(\tilde T \geq s \given T \geq s) S(s-)$. For $t \in \J$, this establishes 
\begin{equation*}
\begin{aligned}
\tilde H_j(t) &= \int_0^t \frac{a_j(s)}{\P(\tilde T \geq s \given T \geq s)} \frac{1}{S(s-)} F_j(\d s) \\
&= \int_0^t \frac{a_j(s)}{\P(\tilde T \geq s \given T \geq s)} H_j(\d s)
\end{aligned}
\end{equation*}
and thereby the equivalence of \ref{it:Lambda} and \ref{it:cond_prob_given_T}, since $H_j$ and $F_j$ have the same null sets on $\J$.
Assume that \ref{it:Lambda} and \ref{it:cond_prob_given_T} hold. By using equation \eqref{eq:prob_vs_B} of the appendix, it can be seen that $B(t) = \P(\tilde T < t \given T \geq t)$ for all $t \in \J$ under this assumption. Since $\P(\tilde T \geq t \given T \geq t) = a_j(t)$ for $F_j$-almost all $t \in \J$ for $j=1, \dots, d$ under the assumption, we have established $a_j(t)+B(t) = 1$ for $F_j$-almost all $t \in \J$ for $j = 1, \dots, d$, which is \ref{it:constant_sum}.
Assume instead that \ref{it:constant_sum} holds.
Equation~\eqref{eq:B_vs_prob} of the appendix implies that, again, $B(t) = \P(\tilde T < t \given T \geq t)$ for all $t \in \J$. Use of the constant-sum condition again then yields $a_j(t) = \P(\tilde T \geq t \given T \geq t)$ for $F_j$-almost all $t \in \J$ for $j=1, \dots, d$, which is \ref{it:cond_prob_given_T}.
\end{proof}

A somewhat stronger martingale property has, however, also been considered. 
Let $N_j(t) = \indic{T \leq t, D = j}$ for $j=1, \dots, d$ and $Y(t) = \indic{T \geq t}$. Define also a filtration by $\F_t = \sigma(N_j(s) : j \in \{1, \dots, d\}, s \leq t)$ and an enlarged filtration by $\filter{G}_t = \F_t \bigvee \tilde \F_t$. What we call the \emph{strong martingale property} is that the processes given by
\begin{equation*}
    N_j(t) - \int_0^t Y(s) H_j(\d s),
\end{equation*}
for $t \geq 0$, for $j=1, \dots, d$ are all martingales with respect to the enlarged filtration $(\filter{G}_t)$. It seems well-known that the processes are martingales with respect to $(\F_t)$. So, loosely speaking, the property states that enlarging the filtration by $(\tilde \F_t)$ does not add any information relevant for the processes. 
This property has similarities to Definition~III.2.1 of \cite{Andersen1993} of an independent right censoring concept, which also requires the underlying martingale processes to be martingales with respect to an enlarged filtration. Similarly, \cite{aalen1978empirical} also require the underlying martingale processes to be martingales with respect to an enlarged filtration.

The property that
\begin{equation*}
    \P(T \leq t, D = j \given \tilde T > s) = \P(T \leq t, D = j \given T > s)
\end{equation*}
for all $t \geq 0$, $s \in \J$ and $j=1,\ldots,d$ plays a role in Theorem~3.1.1 of \cite{Gill1980}, in condition (G) of \cite{jacobsen1989right}, and also matches the interpretation of independent right censoring given by \cite{Keiding2006}, p.\ 466.
We call this the property of \emph{non-prognostic observation} since it implies that, given survival past time $s$, the extra knowledge that the survival past $s$ is observed, $\tilde T > s$, does not influence the prognosis, that is, the probability of having events at a later point in time.

In \cite{williams1977models}, survival is said to be independent of the conditions producing censoring when a property like
\begin{equation*}
    \P(T \leq t, D = j \given \tilde T = s, \tilde D = 0) = \P(T \leq t, D = j \given T > s)
\end{equation*}
for any $t \geq 0$ and $\tilde H_0$-almost all $s \in \J$ holds for $j=1, \dots, d$. With inspiration from \cite{lagakos1979general}, we will call this property \emph{non-prognostic censoring} because, under assumption of this property, the censoring does not provide any prognostic information about the event time or type other than survival to the censoring time.

The following result shows that these three properties are equivalent and, moreover, that they are equivalent to the existence of an independent censoring time. We will refer to them collectively as the \emph{representativity property} because, looking at \ref{it:representation_general} and \ref{it:representation_cond_fixed}, this property implies that those at risk at a given time, $\tilde T>s$, are representative for those being censored at this time, $\tilde T=s,\tilde D=0$, in terms of the event risks.
\begin{proposition} \label{prop:strong_censoring}
The following properties are equivalent.
\begin{enumerate}[label=\textup{(\ref{prop:strong_censoring}.\arabic*)}]
\item The strong martingale property: The processes that are given by $N_j(t) - \int_0^t Y(s) H_j(\d s)$, $t \geq 0$, for $j=1, \dots, d$, are martingales with respect to the enlarged filtration~$(\filter{G}_t)$. \label{it:martingale_enlarged}
\item Non-prognostic observation: $\P(T \leq t, D = j \given \tilde T > s) = \P(T \leq t, D = j \given T > s)$ for all $t \geq 0$ and $s \in \J$.  \label{it:representation_general} 
\item \label{it:representation_cond_fixed}Non-prognostic censoring: $\P(T \leq t, D = j \given \tilde T = s, \tilde D = 0) = \P(T \leq t, D = j \given T > s)$ for all $t \geq 0$ and $\tilde F_0$-almost all $s \in \J$ and $j=1, \dots, d$.   
\item Existence of an independent censoring time: A censoring time, $C>0$, exists such that $\tilde T = T \wedge C$ and $C \independent (T, D)$. \label{it:C_exists}
\end{enumerate} 
\end{proposition}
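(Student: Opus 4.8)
The plan is to establish the cycle of implications $\ref{it:C_exists}\Rightarrow\ref{it:martingale_enlarged}\Rightarrow\ref{it:representation_general}\Rightarrow\ref{it:representation_cond_fixed}\Rightarrow\ref{it:C_exists}$. A by-product obtained along the way is that the identifiability property of Proposition~\ref{prop:weak_censoring} holds, in particular that $a_j(t)=\tilde S(t-)/S(t-)=\P(\tilde T\geq t\given T\geq t)$ for $F_j$-almost all $t\in\J$; this is what is needed to keep control of the part of the sample on which the event itself is observed, $\{\tilde D\neq 0\}=\{\tilde T=T\}$. For $\ref{it:C_exists}\Rightarrow\ref{it:martingale_enlarged}$ I would invoke the classical fact that a martingale survives an enlargement of the filtration by an independent $\sigma$-algebra: the processes $N_j(t)-\int_0^t Y(s)H_j(\d s)$ are $(\F_t)$-martingales and are functions of $(T,D)$ alone, hence independent of $C$, so they remain martingales for $(\F_t\vee\sigma(C))_t$; and since $\tilde T=T\wedge C$ and $\tilde D=D\indic{T\leq C}$, each $\tilde N_i(s)$ is $\F_s\vee\sigma(C)$-measurable, so $\filter{G}_t\subseteq\F_t\vee\sigma(C)$ and the martingale property descends to $(\filter{G}_t)$.

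For $\ref{it:martingale_enlarged}\Rightarrow\ref{it:representation_general}$ I would apply the martingale property with the sets $\{\tilde T>s\}\in\tilde\F_s\subseteq\filter{G}_s$. Writing $g(u)=\P(\tilde T>s,T>u)$ for $u\geq s$ and summing over $j$ the identities $\E[\indic{\tilde T>s}(N_j(t)-N_j(s))]=\E[\indic{\tilde T>s}\int_s^t Y(u)H_j(\d u)]$, one finds that $g$ solves the Volterra-type equation $g(t)=g(s)-\int_{(s,t]}g(u-)\,H(\d u)$, whose unique solution of bounded variation is $g(t)=\tilde S(s)\,S(t)/S(s)$ (uniqueness of the product integral together with the scalar case of Lemma~\ref{lemma:prodi_dH}, exactly as in the proof of Proposition~\ref{prop:consistency}). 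Feeding this back into the $j$-th identity and using $F_j(\d u)=S(u-)\,H_j(\d u)$ yields $\P(\tilde T>s,T\leq t,D=j)=\tilde S(s)\,(F_j(t)-F_j(s))/S(s)$, which is $\ref{it:representation_general}$.

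For $\ref{it:representation_general}\Rightarrow\ref{it:representation_cond_fixed}$ I would disintegrate the joint law of $(\tilde T,T,D)$. By $\ref{it:representation_general}$, for fixed $t$ the map $s'\mapsto\P(\tilde T\leq s',T\in\d t,D=j)/F_j(\d t)$ equals $1-\tilde S(s')/S(s')$ for $s'<t$ and equals $1$ for $s'\geq t$, hence has a jump of size $\tilde S(t-)/S(t-)$ at $s'=t$; the mass sitting on the diagonal $s'=t$ is precisely the observed-event contribution $\P(\tilde T=T\in\d t,D=j)=a_j(t)\,F_j(\d t)$, so $a_j(t)=\tilde S(t-)/S(t-)$, which is the status-independent observation property and thus gives identifiability. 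Removing this diagonal part leaves $\P(\tilde T\in\d s',\tilde D=0,T\in\d t,D=j)=\indic{s'<t}\,\nu(\d s')\,F_j(\d t)$ for some measure $\nu$, and dividing by $\P(\tilde T\in\d s',\tilde D=0)=\nu(\d s')\,S(s')$ identifies the conditional law of $(T,D)$ given $\{\tilde T=s',\tilde D=0\}$ as $\indic{t>s'}\,F_j(\d t)/S(s')$, i.e.\ as the law of $(T,D)$ given $T>s'$. That is $\ref{it:representation_cond_fixed}$.

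The main obstacle is $\ref{it:representation_cond_fixed}\Rightarrow\ref{it:C_exists}$, the construction of an independent censoring time. Disintegrating as before, $\ref{it:representation_cond_fixed}$ gives $\P(\tilde T\in\d s,\tilde D=0,(T,D)\in\d(u,i))=\mu_C(\d s)\,\indic{u>s}\,F_i(\d u)$ with $\mu_C(\d s)=S(s)^{-1}\tilde F_0(\d s)$; marginalizing over $\tilde T$ forces $\P(\tilde D=0\given T=u,D=i)=\mu_C((0,u))=B(u)$, hence $a_i(u)+B(u)=1$, and one checks $B\leq 1$ so that $\mu_C$ extends to a probability measure on $(0,\infty]$ by putting the residual mass at $\infty$. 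On a probability space enlarged to carry a uniform variable $U$ independent of $(T,D,\tilde T,\tilde D)$, I would then put $C=\tilde T$ on $\{\tilde D=0\}$ and let $C$ be the $U$-quantile of $\mu_C$ conditioned on $[T,\infty]$ on $\{\tilde D\neq 0\}=\{\tilde T=T\}$. Since $\tilde D=0$ forces $\tilde T<T$ and $\tilde D\neq 0$ forces $\tilde T=T\leq C$, one has $\tilde T=T\wedge C$; and computing $\P(C\leq c,(T,D)\in A)$ by splitting over $\{\tilde D=0\}$, where $\ref{it:representation_cond_fixed}$ and $\mu_C((0,u))=1-a_i(u)$ reproduce the contribution $\int_{(0,c]}\P((T,D)\in A,\,T>s)\,\mu_C(\d s)$, and over $\{\tilde D\neq 0\}$, where the conditional draw contributes $\sum_i\int\indic{(u,i)\in A}\,\mu_C([u,c])\,F_i(\d u)$, shows that the sum is $\mu_C((0,c])\,\P((T,D)\in A)$, i.e.\ $C\independent(T,D)$ with law $\mu_C$. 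The delicate part is the bookkeeping of atoms (ties among $\tilde T$, $T$ and $C$, and the mass of $\tilde F_0$ near $\tau$) and the verification that the pieces of the censoring law below and above $T$ reassemble into the single law $\mu_C$ irrespective of $(T,D)$ — which is exactly what non-prognostic censoring guarantees.
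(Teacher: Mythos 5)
Your argument is correct in outline and reaches the same equivalences, but by a genuinely different route. Structurally, you prove a single cycle \ref{it:C_exists}$\Rightarrow$\ref{it:martingale_enlarged}$\Rightarrow$\ref{it:representation_general}$\Rightarrow$\ref{it:representation_cond_fixed}$\Rightarrow$\ref{it:C_exists}, whereas the paper establishes each of \ref{it:martingale_enlarged}, \ref{it:representation_cond_fixed}, \ref{it:C_exists} as equivalent to the hub \ref{it:representation_general}. Three of your steps differ in substance. First, your \ref{it:C_exists}$\Rightarrow$\ref{it:martingale_enlarged} via independent enlargement of the filtration (using $\filter{G}_t \subseteq \F_t \vee \sigma(C)$, which does require a short measurability check for $\tilde N_0$) is not in the paper at all; the paper only derives \ref{it:representation_general} from \ref{it:C_exists} by conditioning. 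Second, for \ref{it:representation_general}$\Rightarrow$\ref{it:representation_cond_fixed} the paper routes through the appendix identities \eqref{eq:general_vs_cond_fixed} and \eqref{eq:cond_fixed_vs_constant_sum}, while you disintegrate the joint law of $(\tilde T,\tilde D,T,D)$: the observation that \ref{it:representation_general} forces the sub-diagonal part of this law to be the product measure $\indic{s<u}\,\nu(\d s)\,F_j(\d u)$, with the diagonal mass giving $a_j(t)=\tilde S(t-)/S(t-)$, is a clean structural explanation of what representativity means and immediately yields both \ref{it:cond_prob_given_T} and \ref{it:representation_cond_fixed}. Third, your construction of $C$ from $\mu_C(\d s)=S(s)^{-1}\tilde F_0(\d s)$ via a conditional quantile transform on $\{\tilde D\neq 0\}$ avoids the $\check H_0$ modification, the product structure \eqref{eq:productstruc_stilde}, and the Duhamel-equation computations of Appendix~3; the cancellation you need is exactly $a_i(u)=\mu_C([u,\infty])$, i.e.\ the constant-sum property, and your verification that the two contributions reassemble into $\mu_C((0,c])\P((T,D)\in A)$ is sound. (One can check that your $C$ has the same conditional law as the paper's, since $1-B(t)=\tilde S(t-)/S(t-)=\prodi_0^{t-}(1-\check H_0(\d u))$ under identifiability.) What your approach buys is a more probabilistic proof that bypasses most of the product-integral machinery; what the paper's buys is explicit control of the atoms through $\check H_0$ and reuse of its appendix lemmas, including for Propositions~\ref{prop:weak_eventtime}--\ref{prop:random_censoring}. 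The only caveats are the deferred details you yourself flag: the verification that $\mu_C$ is a sub-probability on $\J$ (which parallels the paper's remark that the constructed $C$ may be improper, with residual mass at $\infty$) and the atom bookkeeping at the right endpoint $\tau$; these work out but should be written down.
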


\begin{proof}
Assume \ref{it:martingale_enlarged} and let $s \in \J$ and $t > s$ be given. Since $\{\tilde T > s\} \in \filter{G}_s$, we can use the martingale property to obtain 
$\E( \indic{T \in (s,t], D = j, \tilde T > s} ) = \E(\int_s^t \indic{T \geq u, \tilde T > s} H_j(\d u) )$ and divide by $\P(\tilde T > s)$ to get
\begin{equation} \label{eq:martingale_to_general}
\P(T \leq t, D = j \given \tilde T > s) = \int_s^t \P(T \geq u \given \tilde T > s) H_j(\d u).
\end{equation}
The $(d+1) \times (d+1)$ matrix-valued function given by $\mat B(s,t) = \{\beta_{ij}(s,t)\}$,  $\beta_{1,j+1}(s,t) = \P(T \leq t, D = j \given \tilde T > s)$ for $j=1, \dots, d$, $\beta_{1,1}(s,t) = 1- \sum_{j=1}^d \beta_{1,j+1}(s,t) = \P(T > t \given \tilde T > s)$, and $\beta_{i,j}(s,t) = \indic{i=j}$ for $i=2, \dots, d+1$ and $j=1,\ldots,d+1$, is right continuous with left limits in both variables and by~\eqref{eq:martingale_to_general} is seen to satisfy the conditions of Lemma~\ref{lemma:prodi_dH}. Thus, we conclude that $\mat B(s,t) = \prodi_s^t(\identmat+\mat H(\d u))$, which then implies that $\mat B(s,t) = \mat P(s,t)$ since $\mat P(s,t) = \prodi_s^t(\identmat+\mat H(\d u))$ as seen earlier. In particular we have $\P(T \leq t, D = j \given \tilde T > s) = \P(T \leq t, D = j \given T > s)$ for all $t \in [s,\infty) \cap \{u : S(u) > 0\}$ by this argument, and this extends to all $t \geq 0$ since $\{T \notin [s,\infty) \cap \{u : S(u) > 0\} \}$ has probability 0 in either probability measure. We have thereby established \ref{it:representation_general}.

Assuming \ref{it:representation_general}, we may argue the other way to obtain, for $u \leq s \leq t$,
\begin{equation*}
\E(\indic{T \in (s,t], D = j, \tilde T > u}) = \E(\int_s^t \indic{T \geq v, \tilde T > u} H_j(\d v)),
\end{equation*}
which is enough to establish the martingale property of \ref{it:martingale_enlarged} since $\filter{G}_t$ is generated by sets of the type $\{ T > t, \tilde T > s \}$ for $s \leq t$ and $\{ T \leq s, D = j, \tilde T > u \}$ for $s,u \leq t$.

Assume again \ref{it:representation_general}. Then we have the strong martingale property, \ref{it:martingale_enlarged}, which is seen to imply \ref{it:martingale} since integration of the $(\filter{G}_{t})$-predictable process $t \mapsto \indic{\tilde T \geq t}$ with respect to the integrator $t \mapsto N_j(t) - \int_0^t Y(s) H_j(\d s)$ yields $t \mapsto \tilde N_j(t) - \int_0^t \tilde Y(s) H_j(\d s)$, which is then a $(\filter{G}_t)$-martingale and thus also a $(\tilde{\filter{F}}_t)$-martingale. 
In light of Proposition~\ref{prop:weak_censoring} this means that \ref{it:Lambda} holds.
For any given $t \geq 0$ and $j \in \{1, \dots, d\}$, equation~\eqref{eq:general_vs_cond_fixed} of the appendix reveals that $\int_s^{\infty}(\P(T \leq t, D = j \given \tilde T = u, \tilde D = 0) - \P(T \leq t, D = j \given T > u)) \tilde F_0(\d u) = 0$ for any $s \geq 0$ since the integrand is 0 for $u>t$, since we are assuming \ref{it:representation_general}, and since the first two integrals of \eqref{eq:general_vs_cond_fixed} are always zero because for $u\geq 0$ either $\tilde H_j(u) = H_j(u)$ for all $j=1, \dots, d$ or $\tilde S(u-) = 0$.
This establishes \ref{it:representation_cond_fixed}.

If we instead assume \ref{it:representation_cond_fixed}, we obtain \ref{it:constant_sum} from equation~\eqref{eq:cond_fixed_vs_constant_sum} of the appendix and so \ref{it:Lambda} from Proposition~\ref{prop:weak_censoring}. Then equation~\eqref{eq:general_vs_cond_fixed} of the appendix shows that \ref{it:representation_general} holds since, again, for $u \geq 0$ either $\tilde H_j(u) = H_j(u)$ for $j=1, \dots, d$ or $\tilde S(u-) = 0$.

Assume now that \ref{it:representation_general} holds and let us show \ref{it:C_exists}. 
The construction used is the one given in Appendix~3 and is based on the modification $\check H_0$ of $\tilde H_0$ as defined in equation~\eqref{eq:check_H0} below. By construction we have that $\tilde T=T\wedge C$. Furthermore, we see how, for $t \leq s$ with $t \in \J$,
\begin{equation*}
\begin{aligned}
\P(T \leq t, D = j, C > s) &= \P(\tilde T \leq t, \tilde D = j, C > s) \\
&= \Prodi_0^s(1 - \check H_0(\d u)) \int_0^t \Prodi_0^{u-}(1 - \tilde H(\d v)) \tilde H_j(\d u) \\
&= \P(C > s) \P(T \leq t, D = j)
\end{aligned}
\end{equation*}
according to equations \eqref{eq:constr_C_vs_TD} and \eqref{eq:constr_C_dist} of the appendix since also \ref{it:Lambda} holds.
The conclusion, $\P(T \leq t, D = j, C > s) =  \P(C > s) \P(T \leq t, D = j)$ remains valid for $t \leq s$ when $t \in (0, \infty) \backslash \J$ and so $s \in (0, \infty) \backslash \J$ since in this case either $\P(C > s) = 0$ or $\P(T \leq t, D=j) = \P(T \in (0,t]\cap\J, D=j)$ because $\P(C \wedge T \in \J) = 1$.
For $t > s$ with $s \in \J$, we have
\begin{equation*}
\begin{aligned}
\P(T \in(s, t], D = j, C > s) &= \P(T \leq t, D = j \given \tilde T > s) \P(\tilde T > s) \\
&= \P(T \leq t, D = j \given T > s) \P(\tilde T > s) \\
&= \int_s^t \Prodi_s^{u-} ( 1 - H(\d v)) H_j(\d u) \Prodi_0^s (1 - (\tilde H + \tilde H_0)(\d u) \\
&= \int_s^t \Prodi_0^{u-} ( 1 - H(\d v)) H_j(\d u) \Prodi_0^s (1 - \check H_0(\d u)) \\
&= \P(T \in (s,t], D = j) \P(C > s),
\end{aligned}
\end{equation*}
using among other things \ref{it:representation_general} and the product structure of \eqref{eq:productstruc_stilde} below. The conclusion $\P(T \in(s, t], D = j, C > s) = \P(T \in (s,t], D = j) \P(C > s)$ remains valid when $s \in (0, \infty)\backslash \J$ since either side is 0 in this case. Put together, this establishes independence of $C$ and $(T, D)$ and so \ref{it:C_exists}.

Under assumption of \ref{it:C_exists} we have $\P(T \leq t, D=j \given \tilde T > s) = \P(T \leq t, D=j \given T > s, C > s) = \P(T \leq t, D=j \given T > s)$, using the independence, and this is \ref{it:representation_general}. 
\end{proof}

As noted by many authors working under assumption of some version of the representativity property, representativity implies identifiability. As demonstrated by \cite{williams1977models} in their setting, the two properties are not equivalent. This is also the case in our setting.

\begin{proposition}
\label{prop:strong_weak_implication}
The representativity property implies the identifiability property, but the reverse does not hold.
\end{proposition}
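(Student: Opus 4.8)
The statement has two halves. The first, that representativity implies identifiability, is already contained in the proof of Proposition~\ref{prop:strong_censoring}: from non-prognostic observation \ref{it:representation_general} one obtains the strong martingale property \ref{it:martingale_enlarged}, and integrating the $(\filter{G}_t)$-predictable process $t \mapsto \indic{\tilde T \geq t}$ against the integrator $t \mapsto N_j(t) - \int_0^t Y(s) H_j(\d s)$ yields $t \mapsto \tilde N_j(t) - \int_0^t \tilde Y(s) H_j(\d s)$, a $(\filter{G}_t)$- and hence $(\tilde{\F}_t)$-martingale; this is the weak martingale property \ref{it:martingale}, which by Proposition~\ref{prop:weak_censoring} is identity of forces of mortality \ref{it:Lambda}. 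For this direction I would simply invoke that argument.

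For the converse I would exhibit one joint law of $(T, D, \tilde T, \tilde D)$ that satisfies identity of forces of mortality while violating non-prognostic observation; by Propositions~\ref{prop:weak_censoring} and~\ref{prop:strong_censoring} this is equivalent to a law whose observed cause-specific hazards reproduce the true ones but which admits no independent censoring time. It is enough to work without competing risks, so take $d = 1$, let $T$ be supported on three time points, and let the censoring act informatively at two distinct earlier instants: those destined for the middle event time are censored at the first of these instants and those destined for the last event time at the second. Such a mechanism is plainly informative -- having survived in the observed data past the first censoring instant already makes one less likely to belong to the middle-event stratum -- so the conditional law of $(T, D)$ given $\tilde T > s$ differs from the one given $T > s$ for an $s$ between the two censoring instants, and \ref{it:representation_general} fails. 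The work is then to pick the mixing probabilities so that, at the same time, the observed hazard at every event time still equals the true hazard there, i.e.\ $\tilde H = H$ on $\J$.

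The main obstacle is exactly this balancing, and it is worth recording why the obvious simplifications are not enough: if the censoring acts at only one instant, or if it is driven by a latent type that one integrates out, then the hazard-identity equations turn out to be the very equations that force the conditional laws given $\tilde T > s$ and given $T > s$ to agree, so identifiability and representativity coincide for such a law. Only when the censoring is spread over at least two instants reaching different event-time strata do the hazard-identity constraints become strictly weaker than the conditional-law constraints, so that one can solve the former while deliberately leaving the latter violated. This is a short finite computation rather than a deep argument, and it is carried out -- together with a second, more applied illustration -- in Section~\ref{sec:examples}.
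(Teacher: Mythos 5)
Your first half is fine and is essentially the paper's own argument: the paper simply notes that the implication was already established inside the proof of Proposition~\ref{prop:strong_censoring} (via the predictable integration of $\indic{\tilde T \geq t}$ that turns the strong martingale property into the weak one), and then adds an even shorter alternative, namely that \ref{it:C_exists} gives $a_j(t) = \P(C \geq t) = \P(\tilde T \geq t \given T \geq t)$, which is \ref{it:cond_prob_given_T}.

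The second half has a genuine gap: the counterexample you sketch cannot be completed as described. Let $d=1$ and let $T$ take values $t_1<t_2<t_3$ with probabilities $p_1,p_2,p_3>0$; in your scheme only the $t_2$-stratum is censored at $c_1$ (with probability $q$, say) and only the $t_3$-stratum at $c_2$ (with probability $r$). The hazard-identity constraints you defer to ``a short finite computation'' then admit only the trivial solution $q=r=0$. Indeed, use the equivalent form \ref{it:cond_prob_given_T}: if $c_1<t_1$ it gives $1=a_1(t_1)=\P(\tilde T\geq t_1\given T\geq t_1)\leq 1-p_2q$, while if $t_1<c_1<t_2$ it gives $1-q=a_1(t_2)=\P(\tilde T\geq t_2\given T\geq t_2)=1-qp_2/(p_2+p_3)$; either way $q=0$ since $p_3>0$, and likewise $r=0$. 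The obstruction is structural: identifiability already fixes, for every event time $t$ and every stratum, the total probability of being censored before $t$, so a stratum that is never censored (your $t_1$-stratum) or an atom that reaches only one stratum while others are at risk is incompatible with $\tilde H=H$. Informativeness compatible with identifiability can only come from reallocating \emph{when} each stratum is censored, not \emph{how much}; a working discrete example therefore needs, for instance, both censoring atoms placed before $t_1$ with every stratum censored with the same overall probability $\pi>0$ but distributed differently over the two atoms. Note also that the computation is not ``carried out in Section~\ref{sec:examples}'': the paper's example is a different construction, starting from independent uniforms $(T_1,C_1)$ on $[0,1]^2$ and re-coupling the latent event time on the censored region only, so that the marginal of $T$ is preserved. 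Identifiability then holds for free, because $\tilde H_j=H_j$ depends only on the marginal laws of $(\tilde T,\tilde D)$ and $(T,D)$, while non-prognostic censoring \ref{it:representation_cond_fixed} visibly fails; that marginal-preserving re-coupling is the cleaner route to the separation you are after.
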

\begin{proof}
In the proof of Proposition~\ref{prop:strong_censoring}, the implication has already been established.
Let us here present another argument. Assume \ref{it:C_exists} and choose a censoring time $C$ accordingly such that $C \independent (T,D)$. Then \ref{it:cond_prob_given_T} holds since $a_j(t) = \P(C \geq t) = \P(\tilde T \geq t \given T \geq t)$ for $F_j$ almost all $t \in \J$ for $j=1, \dots, d$.
This shows the implication.

On the other hand, the event time $T_2$ and the observed pair $(\tilde T,\tilde D)$ constructed in Section~\ref{sec:examples} below provides an example where identifiability holds but representativity does not.
\end{proof}

\section{Censoring by a given censoring time}
\label{sec:given_censoring_time}

In this section we consider as given an event time $T$, an event type $D$, and a censoring time $C$. The observed pair is thus explicitly $\tilde T = T \wedge C$ and $\tilde D = D \indic{T \leq C}$, which is a special case of the setting in Section~\ref{sec:given_event_time}.

For the censoring time, we denote its survival function $K(t)=\P(C>t)$, distribution function $G(t)=\P(C\leq t)$ and cumulative hazard function $H_0(t)=\int_0^t K(s-)^{-1}G(\d s)$. 

As a result of the asymmetry between $T$ and $C$ in the definition of $(\tilde T, \tilde D)$ where $T$ takes priority, a modification of $\tilde H_0$ is relevant for it to be comparable to the defined $H_0$. We let 
\begin{equation} \label{eq:check_H0}
    \check H_0(t) = \int_0^t \frac{1}{1 - \Delta \tilde H(s)} \tilde H_0(\d s)
\end{equation}
define this modification and note that $\Delta \check H_0(t) (1-\Delta \tilde H(t)) = \Delta \tilde H_0(t)$ and so $(1-\Delta \check H_0(t))(1-\Delta \tilde H(t)) = 1-\Delta (\tilde H_0 + \tilde H)(t)$. The continuous parts of $\check H_0$ and $\tilde H_0$ are the same so by the characterization of the product integral $\tilde S(t) = \prodi_0^t(1-(\tilde H_0 + \tilde H)(\d s))$ of Definition~4 from \cite{Gill1990}, the modification allows for the product structure 
\begin{align}
    \label{eq:productstruc_stilde}
    \tilde S(t) = \Prodi_0^t(1-\tilde H(\d s)) \Prodi_0^t(1- \check H_0(\d s))
\end{align}
which has technical importance in the following.

If we define $\check S(t) = \tilde S(t-)(1- \Delta \tilde H(t)) = \prodi_0^t(1-\tilde H(\d s)) \prodi_0^{t-}(1- \check H_0(\d s))$, this modification can also be expressed as $\check H_0(t) = \int_0^t \check S(s)^{-1} \tilde F_0(\d s)$ using the definition of $\tilde H_0$.
The difference $\check S(t) - \tilde S(t)$ is seen to be $\check S(t) \Delta \check H_0(t) = \Delta \tilde F_0(t) = \P(\tilde T = t, \tilde D = 0)$, and, by letting $\check Y(t) = \indic{\tilde T > t} + \indic{\tilde T = t, \tilde D = 0}$, we see that $\check S(t)=\E(\check Y(t))=\P(T>t,C\geq t)$. 

We now consider properties relating to the censoring similar to those of Proposition~\ref{prop:weak_censoring} which are then naturally termed the \emph{censoring identifiability property}.
\begin{proposition}
\label{prop:weak_eventtime}
The following properties are equivalent.
\begin{enumerate}[label=\textup{(\ref{prop:weak_eventtime}.\arabic*)}]
\item We have that $\check H_0(t) = H_0(t)$ for any $t \in \J$. \label{it:Lambda2}
\item The process given by $\tilde N_0(t)-\int_0^t \check Y(s) H_0(\d s)$, $t\geq 0$, is a martingale with respect to the filtration $(\tilde \F_t)$, the observed information.\label{it:martingale2}
\item We have that $\P(\tilde T=t,\tilde D=0\given C=t)=\P(T > t  \given C \geq t)$ for $G$-almost all $t\in\J$.\label{it:cond_prob_given_C}
\item We have that $\P(\tilde T=t,\tilde D=0\given C=t) + \sum_{j=1}^d\int_0^{t} K(s-)^{-1} \tilde F_j(\d s) = 1$ for $G$-almost all $t \in \J$. \label{it:constant_sum2}
\end{enumerate}
\end{proposition}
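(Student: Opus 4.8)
The plan is to mirror the proof of Proposition~\ref{prop:weak_censoring}, interchanging the roles of the event and the censoring and letting the modifications $\check H_0$, $\check S$, $\check Y$ absorb the asymmetry built into $\tilde T = T\wedge C$. The equivalences \ref{it:Lambda2} $\Leftrightarrow$ \ref{it:martingale2} $\Leftrightarrow$ \ref{it:cond_prob_given_C} I expect to be routine; the delicate point is the equivalence with the constant-sum form \ref{it:constant_sum2}.

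For \ref{it:Lambda2} $\Leftrightarrow$ \ref{it:martingale2} I would start from the counterpart of the well-known martingale used there, namely that $\tilde N_0(t) - \int_0^t \check Y(s)\check H_0(\d s)$, $t\geq 0$, is a $(\tilde\F_t)$-martingale. This holds because $\tilde N_0(t) - \int_0^t \tilde Y(s)\tilde H_0(\d s)$ is a $(\tilde\F_t)$-martingale and, using $\Delta\check H_0(s)(1-\Delta\tilde H(s)) = \Delta\tilde H_0(s)$ together with $\E\check Y(s) = \check S(s) = \tilde S(s-)(1-\Delta\tilde H(s))$, the difference $\int_0^t(\check Y(s)\check H_0(\d s) - \tilde Y(s)\tilde H_0(\d s))$ is seen to be a $(\tilde\F_t)$-martingale as well. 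Under \ref{it:Lambda2}, since $\check Y$ vanishes and $\tilde N_0$ has no increment outside $\J$ almost surely, $\check H_0$ may be replaced by $H_0$ in the integral over $(0,t]$, which gives \ref{it:martingale2}; conversely, taking expectations in \ref{it:martingale2} yields $\tilde F_0(t) = \int_0^t \check S(s)\,H_0(\d s)$ for $t\in\J$, and since $\check S(s)\geq\tilde S(s) > 0$ for $s\leq t\in\J$ we may integrate $\check S(s)^{-1}$ against both sides and use $\check H_0(t) = \int_0^t \check S(s)^{-1}\tilde F_0(\d s)$ to recover \ref{it:Lambda2}.

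For \ref{it:Lambda2} $\Leftrightarrow$ \ref{it:cond_prob_given_C} I would set $\gamma_0(s) = \P(\tilde T = s,\tilde D = 0\given C = s)$, so that $\tilde F_0(\d s) = \gamma_0(s)\,G(\d s)$, and use $\check S(s) = \P(T > s, C\geq s) = \P(T > s\given C\geq s)\,K(s-)$, giving for $t\in\J$
\[
    \check H_0(t) = \int_0^t \check S(s)^{-1}\tilde F_0(\d s) = \int_0^t \frac{\gamma_0(s)}{\P(T > s\given C\geq s)}\,H_0(\d s);
\]
since $H_0$ and $G$ have the same null sets on $\J$, $\check H_0 = H_0$ on $\J$ is then equivalent to $\gamma_0(s) = \P(T > s\given C\geq s)$ for $G$-almost all $s\in\J$, which is \ref{it:cond_prob_given_C}. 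For the passage from \ref{it:Lambda2} (equivalently \ref{it:cond_prob_given_C}) to \ref{it:constant_sum2}, I would use that under \ref{it:Lambda2} one has $\prodi_0^t(1-\check H_0(\d s)) = \prodi_0^t(1-H_0(\d s)) = K(t)$ for $t\in\J$, so that the product structure \eqref{eq:productstruc_stilde} gives $\tilde S(s-) = K(s-)\prodi_0^{s-}(1-\tilde H(\d u))$ and $\check S(t) = K(t-)\prodi_0^t(1-\tilde H(\d u))$ on $\J$; combining $\tilde F_j(\d s) = \tilde S(s-)\tilde H_j(\d s)$ with the Volterra identity $1-\prodi_0^t(1-\tilde H(\d s)) = \int_0^t\prodi_0^{s-}(1-\tilde H(\d u))\,\tilde H(\d s)$ then yields $\sum_{j=1}^d\int_0^t K(s-)^{-1}\tilde F_j(\d s) = 1-\prodi_0^t(1-\tilde H(\d s))$, while $\P(T > t\given C\geq t) = \check S(t)/K(t-) = \prodi_0^t(1-\tilde H(\d s))$; adding these and invoking \ref{it:cond_prob_given_C} gives \ref{it:constant_sum2}.

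The implication \ref{it:constant_sum2} $\Rightarrow$ \ref{it:cond_prob_given_C} is the step I expect to be the real obstacle, and I would treat it exactly as the corresponding step in the proof of Proposition~\ref{prop:weak_censoring}: one invokes the appendix identity that here plays the role equation~\eqref{eq:B_vs_prob} plays there, namely a product-integral expression for $\sum_{j=1}^d\int_0^t K(s-)^{-1}\tilde F_j(\d s)$ which, once the constant-sum relation has been substituted into it, forces $\check H_0 = H_0$ on $\J$ and hence \ref{it:cond_prob_given_C} through the equivalence already shown. The difficulty is that in this direction one cannot presuppose the collapse $\prodi_0^t(1-\check H_0(\d s)) = K(t)$ used above, so the clean identity $\sum_{j=1}^d\int_0^t K(s-)^{-1}\tilde F_j(\d s) = \P(T\leq t\given C\geq t)$ is not available a priori and must be extracted from the constant-sum assumption by a careful product-integral argument; the remaining bookkeeping is routine once one notes that $\check S(s) > 0$ and $K(s-) > 0$ for $s\leq t\in\J$, so that all divisions are legitimate.
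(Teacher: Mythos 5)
Your proof is correct and, for three of the four links, it is essentially the paper's own argument: the modified martingale $\tilde N_0(t)-\int_0^t\check Y(s)\check H_0(\d s)$ for \ref{it:Lambda2}$\Leftrightarrow$\ref{it:martingale2}, the identity $\check H_0(t)=\int_0^t \P(\tilde T=s,\tilde D=0\given C=s)\,\big(\check S(s)/K(s-)\big)^{-1}H_0(\d s)$ for \ref{it:Lambda2}$\Leftrightarrow$\ref{it:cond_prob_given_C}, and, for \ref{it:constant_sum2}$\Rightarrow$\ref{it:cond_prob_given_C}, the unconditional identity you correctly anticipate as the counterpart of \eqref{eq:B_vs_prob}, which is exactly \eqref{eq:B_vs_prob2} of the appendix. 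The one place you genuinely depart from the paper is the direction \ref{it:Lambda2}$\Rightarrow$\ref{it:constant_sum2}: the paper cites the unconditional identity \eqref{eq:prob_vs_B2}, whereas you observe that under \ref{it:Lambda2} the product structure \eqref{eq:productstruc_stilde} collapses to $\tilde S(s-)=K(s-)\prodi_0^{s-}(1-\tilde H(\d u))$ and $\check S(t)=K(t-)\prodi_0^{t}(1-\tilde H(\d u))$, so the Volterra equation gives $\sum_{j}\int_0^tK(s-)^{-1}\tilde F_j(\d s)=1-\prodi_0^t(1-\tilde H(\d s))=1-\P(T>t\given C\geq t)$ directly; this is valid and arguably more self-contained, trading the Duhamel-equation bookkeeping behind \eqref{eq:prob_vs_B2} for an argument that only works under the hypothesis \ref{it:Lambda2} (which is all that is needed in that direction).
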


\begin{proof}
The equivalence of \ref{it:Lambda2} and \ref{it:martingale2} follows by a similar argument as in the proof of Proposition~\ref{prop:weak_censoring} but now using the fact that $\tilde N_0(t)-\int_0^t\check Y(s)\check H_0(\d s)$, $t\geq 0$, can be shown to define a martingale.

The equivalence of \ref{it:Lambda2} and \ref{it:cond_prob_given_C} is obtained by mimicking the steps in Proposition~\ref{prop:weak_censoring} while using the identity
\begin{align*}
\check H_0(t)=\int_0^t\frac{\P(\tilde T=s,\tilde D=0\given C=s)}{\check S(s)/K(s-)}H_0(\d s).
\end{align*}
The equivalence then follows by noting that $\check S(s)/K(s-)=\P(T > t  \given C \geq t)$.

The identity in \eqref{eq:prob_vs_B2} of the appendix immediately shows that \ref{it:Lambda2} implies \ref{it:constant_sum2} by exploiting the fact that we have already established the equivalence between \ref{it:cond_prob_given_C} and \ref{it:Lambda2}. Similarly, the identity in \eqref{eq:B_vs_prob2} of the appendix immediately shows that \ref{it:constant_sum2} implies \ref{it:cond_prob_given_C}.
\end{proof}

\cite{williams1977models} considered, with inspiration from \cite{gail1975review}, an independent censoring assumption which, in this setting, may be formulated as the following property. The property is
\begin{equation*}
    \tilde F_j(t) = \int_0^t K(s-) F_j(\d s)
\end{equation*}
for $j=1, \dots, d$ and
\begin{equation*}
    \tilde F_0(t) = \int_0^t S(s) G(\d s)
\end{equation*}
for all $t \in \J$. In \cite{williams1977models}, this assumption was seen to be a stronger assumption than the constant-sum property, here given in \ref{it:constant_sum}. As was also noted by \cite{kalbfleisch1979constant}, in the setting of their paper, this is the case only because it includes an additional requirement on the given censoring time. This is the content of the following result.

\begin{proposition} \label{prop:2weak}
The following properties are equivalent.
\begin{enumerate}[label=\textup{(\ref{prop:2weak}.\arabic*)}]
    \item $\tilde H_j(t) = H_j(t)$ for $j=1, \dots, d$ and $\check H_0(t) = H_0(t)$ for all $t \in \J$. \label{it:all_forces_of_mortality}
    \item $\tilde F_j(t) = \int_0^t K(s-) F_j(\d s)$ for $j=1, \dots, d$ and $\tilde F_0(t) = \int_0^t S(s) G(\d s)$ for all $t \in \J$. \label{it:Williams_Lagakos_S2}
    \item $\P(C \geq t \given T = t, D = j) = \P(C \geq t)$ for $F_j$-almost all $t \in \J$ for $j=1, \dots, d$ and $\P(T > t \given C = t) = \P(T > t)$ for $G$-almost all $t \in \J$. \label{it:pointwise_independence}
\end{enumerate}
\end{proposition}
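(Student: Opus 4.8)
The plan is to establish the two equivalences \ref{it:Williams_Lagakos_S2} $\Leftrightarrow$ \ref{it:pointwise_independence} and \ref{it:all_forces_of_mortality} $\Leftrightarrow$ \ref{it:Williams_Lagakos_S2}, the implication \ref{it:Williams_Lagakos_S2} $\Rightarrow$ \ref{it:all_forces_of_mortality} being the substantial one. Two elementary facts particular to the present setting are used throughout. Since $\tilde T = T\wedge C$ and $\tilde D = D\indic{T\leq C}$, on $\{T=t,D=j\}$ with $j\geq 1$ we have $\tilde D=j$ exactly when $C\geq t$ and then $\tilde T=t$, so $a_j(t)=\P(\tilde T=t,\tilde D=j\given T=t,D=j)=\P(C\geq t\given T=t,D=j)$; similarly, on $\{C=s\}$ we have $\tilde D=0$ exactly when $T>s$ and then $\tilde T=s$, so the $G$-density of $\tilde F_0$ is $s\mapsto \P(\tilde T=s,\tilde D=0\given C=s)=\P(T>s\given C=s)$, that is, $\tilde F_0(t)=\int_0^t\P(T>s\given C=s)\,G(\d s)$. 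Write $F=\sum_{j=1}^d F_j=1-S$ for the distribution function of $T$, and note $\P(C\geq t)=K(t-)$.

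For \ref{it:Williams_Lagakos_S2} $\Leftrightarrow$ \ref{it:pointwise_independence}: using the general identity $\tilde F_j(t)=\int_0^t a_j(s)\,F_j(\d s)$ and the density statement above, property \ref{it:Williams_Lagakos_S2} says precisely that the finite signed measures $(a_j(s)-K(s-))\,F_j(\d s)$ for $j=1,\dots,d$ and $(\P(T>s\given C=s)-S(s))\,G(\d s)$ assign mass zero to every $[0,t]$ with $t\in\J$. Since $[0,t]\subseteq\J$ for $t\in\J$ and these intervals form a generating $\pi$-system for the Borel subsets of $\J$, this is equivalent to those signed measures vanishing on $\J$, i.e.\ to $a_j(s)=K(s-)$ for $F_j$-almost all $s\in\J$ and $\P(T>s\given C=s)=S(s)$ for $G$-almost all $s\in\J$, which, after substituting $a_j(s)=\P(C\geq s\given T=s,D=j)$, $K(s-)=\P(C\geq s)$ and $S(s)=\P(T>s)$, is \ref{it:pointwise_independence}.

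For \ref{it:all_forces_of_mortality} $\Rightarrow$ \ref{it:Williams_Lagakos_S2}: from $\tilde H=\sum_j\tilde H_j=\sum_j H_j=H$ and $\check H_0=H_0$ on $\J$, the product structure \eqref{eq:productstruc_stilde} together with $\prodi_0^t(1-H(\d s))=S(t)$ (the $(1,1)$-entry of $\mat P(t)=\prodi_0^t(\identmat+\mat H(\d s))$) and the analogous $\prodi_0^t(1-H_0(\d s))=K(t)$ yields $\tilde S(t)=S(t)K(t)$ for $t\in\J$; hence $\tilde S(t-)=S(t-)K(t-)$, and then $\check S(t)=\tilde S(t-)(1-\Delta\tilde H(t))=S(t-)(1-\Delta H(t))\,K(t-)=S(t)K(t-)$, using $S(t-)(1-\Delta H(t))=S(t)$. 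Since \ref{it:all_forces_of_mortality} contains identity of forces of mortality, Proposition~\ref{prop:weak_censoring} gives $a_j(s)=\P(\tilde T\geq s\given T\geq s)=\tilde S(s-)/S(s-)=K(s-)$ for $F_j$-almost all $s\in\J$, whence $\tilde F_j(t)=\int_0^t a_j(s)F_j(\d s)=\int_0^t K(s-)F_j(\d s)$ for $t\in\J$; and since \ref{it:all_forces_of_mortality} contains $\check H_0=H_0$, Proposition~\ref{prop:weak_eventtime} gives $\P(\tilde T=s,\tilde D=0\given C=s)=\check S(s)/K(s-)=S(s)$ for $G$-almost all $s\in\J$, whence $\tilde F_0(t)=\int_0^t S(s)G(\d s)$ for $t\in\J$. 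This is \ref{it:Williams_Lagakos_S2}.

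The main obstacle is \ref{it:Williams_Lagakos_S2} $\Rightarrow$ \ref{it:all_forces_of_mortality}: here the product structure is not available a priori, and the factorization $\tilde S=SK$ on $\J$ must be obtained directly from the integral formulas in \ref{it:Williams_Lagakos_S2}. The key computation is integration by parts for right-continuous functions of bounded variation, $S(t)K(t)=1-\int_0^t S(s-)G(\d s)-\int_0^t K(s)F(\d s)$, compared with $\tilde S(t)=1-\int_0^t K(s-)F(\d s)-\int_0^t S(s)G(\d s)$ read off from \ref{it:Williams_Lagakos_S2}; their difference equals $\int_0^t\Delta K(s)F(\d s)-\int_0^t\Delta S(s)G(\d s)=-\sum_{s\leq t}\Delta G(s)\Delta F(s)+\sum_{s\leq t}\Delta F(s)\Delta G(s)=0$, so $\tilde S(t)=S(t)K(t)$ for $t\in\J$, and then $\check S(s)=\tilde S(s)+\Delta\tilde F_0(s)=S(s)K(s)+S(s)\Delta G(s)=S(s)K(s-)$ on $\J$. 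With these factorizations the conclusion follows by direct computation: for $t\in\J$, $\tilde H_j(t)=\int_0^t\tilde S(s-)^{-1}\tilde F_j(\d s)=\int_0^t(S(s-)K(s-))^{-1}K(s-)F_j(\d s)=H_j(t)$ and $\check H_0(t)=\int_0^t\check S(s)^{-1}\tilde F_0(\d s)=\int_0^t(S(s)K(s-))^{-1}S(s)G(\d s)=H_0(t)$, the strict positivity of $S$, $K(\cdot-)$ and $\check S$ on $\J$ justifying the divisions. Care is needed with the ``almost all'' qualifiers, with transferring the integral identities of \ref{it:Williams_Lagakos_S2} into statements about the integrating measures restricted to $\J$ (again using $[0,t]\subseteq\J$ for $t\in\J$), and with keeping $\tilde S$, $\tilde S(\cdot-)$ and $\check S$ apart; the argument could alternatively be phrased via appendix identities in the style of those used for Proposition~\ref{prop:weak_eventtime}.
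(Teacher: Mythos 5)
Your proof is correct. The equivalence \ref{it:Williams_Lagakos_S2} $\Leftrightarrow$ \ref{it:pointwise_independence} and the direction \ref{it:all_forces_of_mortality} $\Rightarrow$ \ref{it:Williams_Lagakos_S2} follow the paper's argument essentially verbatim (the paper writes $\tilde F_j(t)=\int_0^t \tilde S(s-)H_j(\d s)$ directly rather than passing through status-independent observation, but this is cosmetic). Where you genuinely diverge is in \ref{it:Williams_Lagakos_S2} $\Rightarrow$ \ref{it:all_forces_of_mortality}. The paper's route is to deduce $B(t)=G(t-)=1-K(t-)$ from the second half of \ref{it:Williams_Lagakos_S2}, combine with the first half to read off the constant-sum property \ref{it:constant_sum}, and then invoke Proposition~\ref{prop:weak_censoring} (and, symmetrically, \ref{it:constant_sum2} and Proposition~\ref{prop:weak_eventtime}) to land in \ref{it:all_forces_of_mortality}. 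You instead establish the factorizations $\tilde S=SK$ and $\check S(\cdot)=S(\cdot)K(\cdot-)$ on $\J$ directly from the integral formulas of \ref{it:Williams_Lagakos_S2} by integration by parts (the cross terms $\sum\Delta F\,\Delta G$ cancelling), and then compute $\tilde H_j=H_j$ and $\check H_0=H_0$ from the definitions. Both are sound; your version is more self-contained in that it bypasses the constant-sum characterizations and the appendix identities \eqref{eq:prob_vs_B}--\eqref{eq:B_vs_prob2} for this direction, at the cost of redoing by hand the product/factorization structure that the paper gets for free from \eqref{eq:productstruc_stilde} and the earlier propositions. The points you flag as needing care (positivity of $S$, $K(\cdot-)$, $\check S$ on $\J$, which follows from $\tilde S>0$ on $\J$ once the factorization is in hand, and the passage from equality of integrals over $[0,t]$, $t\in\J$, to equality of the integrating densities) are handled adequately.
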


\begin{proof}
Assume that \ref{it:all_forces_of_mortality} holds. The product structure $\tilde S(t) = \prodi_0^t(1-\tilde H(\d s)) \prodi_0^t(1-\check H_0(\d s))$ results in $\tilde S(t) = S(t) K(t)$ and similarly $\check S(t) = S(t) K(t-)$ under the assumption. Using the assumption again, we have $\tilde F_j(t) = \int_0^t \tilde S(s-) H_j(\d s) = \int_0^t K(s-) F_j(\d s)$ and $\tilde F_0(t) = \int_0^t \check S(s) H_0(\d s) = \int_0^t S(s) G(\d s)$ which is \ref{it:Williams_Lagakos_S2}.

Assume now that \ref{it:Williams_Lagakos_S2} holds. Then $B(t) = \int_0^{t-} S(s)^{-1} \tilde F_0(\d s) = G(t-) = 1-K(t-)$ using the last part of the assumption. Using this in combination with the first part of the assumption yields $\tilde F_j(t) = \int_0^t(1-B(s)) F_j(\d s)$. We already know that $\tilde F_j(t) = \int_0^t a_j(s) F_j(\d s)$, so the constant sum property of \ref{it:constant_sum} and hence also \ref{it:Lambda} follow. The property~\ref{it:constant_sum2} and so \ref{it:Lambda2} can be obtained in a similar manner. This establishes \ref{it:all_forces_of_mortality}.

From the equalities $\tilde F_j(t) = \int_0^t \P(C \geq s \given T = s, D = j) F_j(\d s)$ and $\tilde F_0(t) = \int_0^t \P(T > s \given C = s) G(\d s)$ which hold for all $t \in \J$, the properties of \ref{it:Williams_Lagakos_S2} and \ref{it:pointwise_independence} are seen to be equivalent.
\end{proof}

The equivalence between \ref{it:all_forces_of_mortality} and \ref{it:Williams_Lagakos_S2} shows that the property introduced by \cite{williams1977models} is equivalent to having both the identifiability and the censoring identifiability property. The property of \ref{it:pointwise_independence} can be considered \emph{pointwise} independence between $(T,D)$ and $C$ and, as is evident from the proof of Proposition~\ref{prop:2weak}, it also implies $\P(T > t, C > t) = \P(T > t) \P(C > t)$ for all $t \geq 0$. For this reason, we refer to the properties in Proposition~\ref{prop:2weak} collectively as the property of \emph{pointwise independence}. It does not imply independence of $(T,D)$ and $C$, however. 

Independence of $(T, D)$ and $C$ is here referred to as \emph{full independence}. This assumption is made by many authors, and is, for instance, used in \cite{kaplan1958nonparametric}. 
In \cite{lagakos1979general}, the property is described as strictly stronger than the non-prognostic censoring property from Proposition~\ref{prop:strong_censoring}. The next result shows that this is the case only because full independence includes a further property of representativity of the given censoring time. This property is that
\begin{equation} \label{eq:representation_cond_fixed2}
    \P(C \leq t \given \tilde T = s, \tilde D = j) = \P(C \leq t \given C \geq s)
\end{equation}
holds for any $t \geq 0$ and $\tilde F_j$-almost all $s \in \J$ for $j=1, \dots, d$. We will refer to this as the \emph{censoring representativity property} as it is a counterpart to \ref{it:representation_general}. An argument similar to the one used in Proposition~\ref{prop:strong_weak_implication} shows that censoring representativity implies censoring identifiability but that the two properties are not equivalent. The following result now applies.

\begin{proposition} \label{prop:random_censoring}
Full independence, $C \independent (T,D)$, holds if and only if both the representativity property and the censoring representativity property hold.
\end{proposition}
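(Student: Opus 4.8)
The plan is to prove the two implications separately, leaning on the equivalences of Propositions~\ref{prop:weak_censoring}--\ref{prop:2weak} and on the remark made just above that censoring representativity implies censoring identifiability (the ``argument similar to the one used in Proposition~\ref{prop:strong_weak_implication}'' already invoked there). For the ``only if'' direction, assume $C\independent(T,D)$. Since $\tilde T=T\wedge C$ by construction, property~\ref{it:C_exists} of Proposition~\ref{prop:strong_censoring} holds, so the representativity property holds. For the censoring representativity property I would note that, for $j\in\{1,\dots,d\}$, the event $\{\tilde T=s,\tilde D=j\}$ coincides with $\{T=s,D=j,C\geq s\}$; hence, by independence, the measure $s\mapsto\P(C\leq t,\tilde T\in\d s,\tilde D=j)$ disintegrates as $\P(s\leq C\leq t)\,F_j(\d s)$, while likewise $\tilde F_j(\d s)=K(s-)\,F_j(\d s)$. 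Dividing gives $\P(C\leq t\given\tilde T=s,\tilde D=j)=\P(s\leq C\leq t)/K(s-)=\P(C\leq t\given C\geq s)$ for $\tilde F_j$-almost all $s\in\J$, which is~\eqref{eq:representation_cond_fixed2}.

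For the ``if'' direction, assume both representativity and censoring representativity. By Proposition~\ref{prop:strong_weak_implication} the identifiability property holds, $\tilde H_j=H_j$ on $\J$ for $j=1,\dots,d$, and by the remark above the censoring identifiability property holds, $\check H_0=H_0$ on $\J$. Thus~\ref{it:all_forces_of_mortality} of Proposition~\ref{prop:2weak} holds, hence also~\ref{it:Williams_Lagakos_S2}, so that $\tilde F_j(\d v)=K(v-)\,F_j(\d v)$ on $\J$ and, via the product structure~\eqref{eq:productstruc_stilde}, $\tilde S(t)=S(t)K(t)$ on $\J$. The target is the factorization
\begin{equation*}
\P(T\leq t,D=j,C>s)=\P(T\leq t,D=j)\,\P(C>s)
\end{equation*}
for all $t\geq 0$, all $s\in\J$ and $j=1,\dots,d$; summing over $j$ and complementing within $\{C>s\}$ then also gives $\P(T>t,C>s)=S(t)\,\P(C>s)$, so the factorization holds on a $\pi$-system generating $\sigma(T,D)$, and $C\independent(T,D)$ follows.

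I would verify the factorization in two regimes. When $t\leq s$, the event $\{T\leq t,D=j,C>s\}$ equals $\{\tilde T\leq t,\tilde D=j,C>s\}$ because $T\leq t\leq s<C$ forces $\tilde T=T$; disintegrating over $\tilde T$ and invoking~\eqref{eq:representation_cond_fixed2} together with $\tilde F_j(\d v)=K(v-)F_j(\d v)$ gives
\begin{equation*}
\P(T\leq t,D=j,C>s)=\int_{(0,t]}\frac{K(s)}{K(v-)}\,\tilde F_j(\d v)=K(s)F_j(t).
\end{equation*}
When $t>s$, I would split $\P(T\leq t,D=j,C>s)$ at $s$: the part with $T\leq s$ equals $K(s)F_j(s)$ by the previous display (with $t$ replaced by $s$), while for the part with $s<T\leq t$ the identity $\{s<T\}\cap\{C>s\}=\{s<T\}\cap\{\tilde T>s\}$, non-prognostic observation~\ref{it:representation_general}, and $\tilde S(s)=S(s)K(s)$ yield $\P(s<T\leq t,D=j,C>s)=K(s)\,\P(s<T\leq t,D=j)$; the two parts add to $K(s)F_j(t)$ once more.

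It remains to pass from $s\in\J$ to all $s>0$, which I expect to be the only delicate step; it is dispatched exactly as in the proof of Proposition~\ref{prop:strong_censoring}, using that either $\P(C>s)=0$, so that both sides vanish, or $\P(T\wedge C\in\J)=1$ reduces matters to the case already treated. Conceptually there is nothing new: non-prognostic observation governs the region $t>s$ and censoring representativity governs the region $t\leq s$, each controlling one ``half'' of the joint law of $(T,D,C)$, and the product structures from Proposition~\ref{prop:2weak} glue the two halves together. The main care needed is the usual bookkeeping around null sets and the right endpoint of $\J$ --- in particular ensuring that $K(v-)>0$ and $S(s)>0$ wherever a division occurs, which follows from $\tilde S=SK$ on $\J$.
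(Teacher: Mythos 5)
Your proposal is correct and follows essentially the same route as the paper: the ``only if'' direction is the same direct conditioning argument, and your ``if'' direction unpacks exactly the two-regime calculation ($t\leq s$ governed by censoring representativity, $t>s$ by non-prognostic observation, extension off $\J$ via $\P(\tilde T\in\J)=1$) that the paper invokes by reference to the construction behind \ref{it:C_exists} in Proposition~\ref{prop:strong_censoring}. The only cosmetic difference is that you route the bookkeeping through Proposition~\ref{prop:2weak} (using $\tilde F_j(\d v)=K(v-)F_j(\d v)$ and $\tilde S=SK$) where the paper works with product integrals of $\check H_0$ and $\tilde H_j$.
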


\begin{proof}
It is evident that full independence implies \ref{it:C_exists}. Similarly, $\P(C \leq t \given \tilde T = s, \tilde D = j) = \P(C \leq t \given C \geq s, T = s, D = j) = \P(C \leq t \given C \geq s)$ for any $t \geq 0$ and $\tilde H_j$-almost all $s \in \J$ for $j=1, \dots, d$ under the independence assumption.

Assume instead that the properties of Proposition~\ref{prop:strong_censoring} and \eqref{eq:representation_cond_fixed2} hold. 
By equation~\eqref{eq:cond_fixed_vs_constant_sum2} of the appendix, \ref{it:constant_sum2} and so, by Proposition~\ref{prop:weak_eventtime}, also \ref{it:Lambda2}. Now \eqref{eq:representation_cond_fixed2} states that $\P(C > t \given \tilde T = s, \tilde D = j) = \prodi_{s-}^t(1- \check H_0(\d u))$. 
This is exactly the conditional distribution of the independent censoring time constructed in the proof of Proposition~\ref{prop:strong_censoring}. Since we are assuming that the properties of Proposition~\ref{prop:strong_censoring} hold, the same calculations lead to the independence of $C$ and $(T,D)$.
\end{proof}

\section{Examples}
\label{sec:examples}

\subsection{A technical setting}

This technical example serves to illustrate the differences between the identifiability and representativity properties. 

Consider the probability space $(\Omega, \F, \P)$ with $\Omega = [0,1]^2 = \{ (t,c) \in \R^2 \given 0 \leq t \leq 1, 0 \leq c \leq 1\}$, $\F$ the Borel $\sigma$-algebra, and $\P$ the uniform distribution such that $\P([s,t] \times [u,v]) = (t-s)(v-u)$ for $s \leq t$, $u \leq v$, all in $[0,1]$.
The random variables given by $T_1(t,c) = t$ and $C_1(t,c) = c$ are then independent. We further define the random variables
\begin{align*}
    T_2(t,c) &= \begin{cases} (1-c) & \textup{if } \frac{1}{2} \leq t \leq 1, 0 \leq c < \frac{1}{2} \\  t & \textup{otherwise} \end{cases} \\
    C_2(t,c) &= \begin{cases} (1-t) & \textup{if } 0 \leq t \leq \frac{1}{2}, \frac{1}{2} \leq c \leq 1 \\  c & \textup{otherwise} \end{cases},
\end{align*}
and $C_3(t,c) = c\indic{c < t} + \indic{c \geq t}$. A direct calculation reveals that the distributions of $T_1,T_2,C_1,C_2$ are all uniform on $[0,1]$.

If we define $\tilde T(t,c)=t\wedge c$ and $\tilde D(t,c)=\indic{t\leq c}$, then $\tilde T=T_i\wedge C_j$ and $\tilde D=\indic{T_i\leq C_j}$ for any choice of $i \in \{1,2\}$ and $j \in \{1,2,3\}$. That is, any combination of the event and censoring times defined above yields the same observable exit time and exit type.

Note that the representativity property holds for $T_1$ by virtue of \ref{it:C_exists} because $T_1$ is independent of $C_1$ and $\tilde T=T_1 \wedge C_1$ and so by Proposition~\ref{prop:strong_weak_implication}, the identifiability property also holds for $T_1$. Thus, the identifiability property also holds for $T_2$ since, for instance, the property of identity of forces of mortality is inherited from $T_1$ as $T_1$ and $T_2$ have the same distribution. A calculation reveals that for $\tilde F_0$-almost all $s\in[0,\frac{1}{2})$ we have $\P(T_2 \leq 1-s \given \tilde T = s, \tilde D = 0) = 1$ and $\P(T_2 \leq 1-s \given T_2 > s) = 1 - s/(1-s)$ such that non-prognostic censoring and thereby representativity cannot hold for $T_2$.
Similarly, censoring representativity holds for $C_1$, but cannot hold for $C_2$.

Since, for $t \in [0,t)$, $\P(C_3 \leq t) = \P(\tilde T \leq t, \tilde D = 0) = \tilde F_0(t)$, the cumulative hazard associated with the distribution of $C_3$ is $\int_0^t (1- \tilde F_0(s))^{-1} \tilde F_0(\d s) < \int_0^t (1- \tilde F_0(s) - \tilde F_1(s))^{-1} \tilde F_0(\d s) = \int_0^t \tilde S(s)^{-1} \tilde F_0(\d s) = \tilde H_0(t)$ for all $t \in [0,1)$ such that censoring identifiability cannot hold for $C_3$.

Figure~\ref{fig:distributions} illustrates the definition of $T_i$ and $C_j$ as well as the observed exit time $\tilde T$ as a heat map. Note how, for any combination of $T_i$ and $C_j$, the minimum of their respective graphs correspond to the graph of $\tilde T$. The assumptions met for the various choices of $T_i$ and $C_j$ to produce $(\tilde T, \tilde D)$ are summarized by Table~\ref{tab:example_table}.

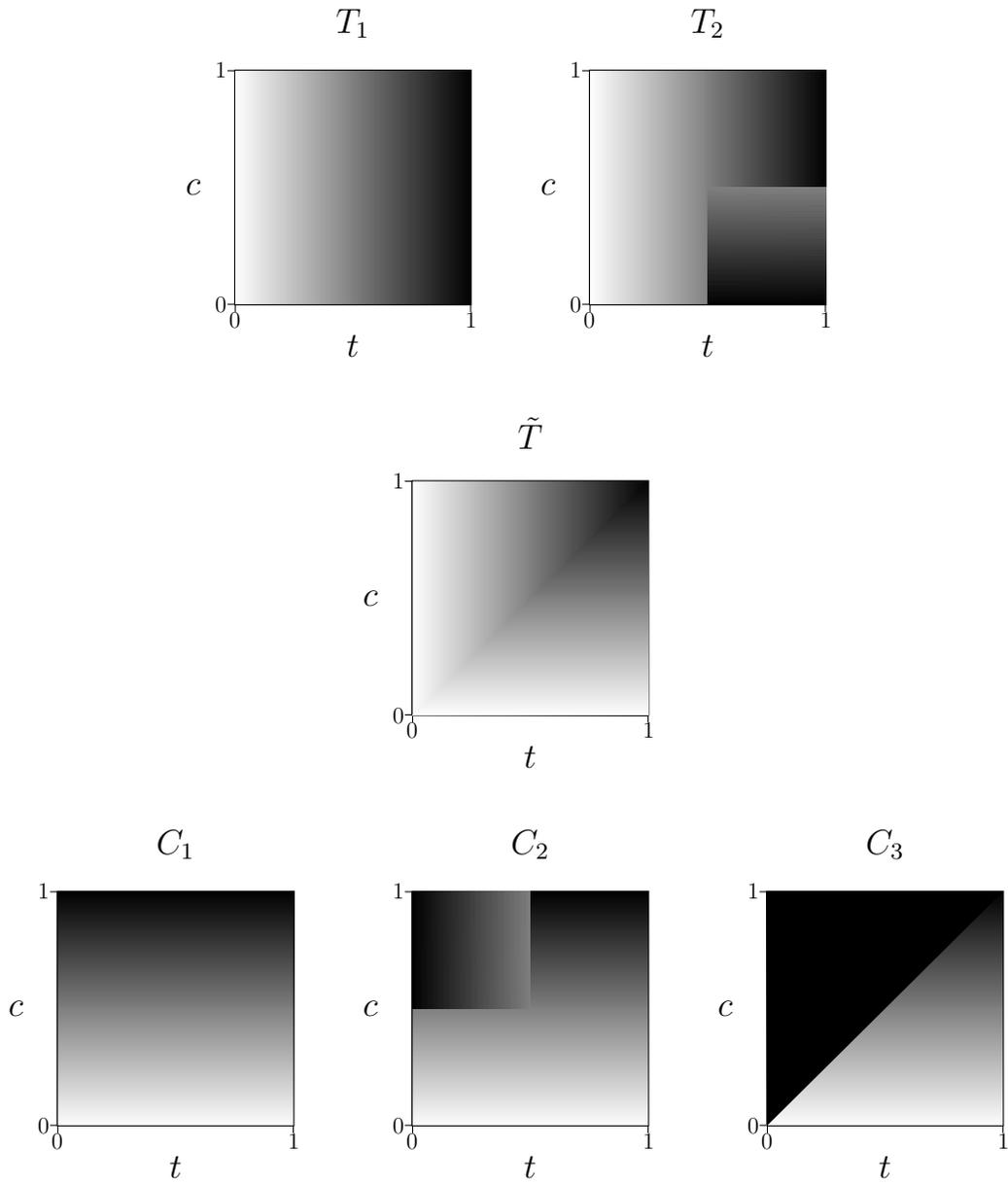
\begin{figure}
    \centering

    \begin{tikzpicture}[scale=3.2, every node/.style={scale=0.75}]
        \draw [shift={(0,0)}] (0.5,1.2) node [scale=1.5] {$T_1$};
        \draw [shift={(0,0)},|-|] (0,0) node [below] {0} -- (1,0) node [midway,yshift=-0.75cm,scale=1.5] {$t$} node [below] {1};
        \draw [shift={(0,0)},|-|] (0,0) node [left]  {0} -- (0,1) node [midway,xshift=-0.75cm,scale=1.5]  {$c$} node [left]  {1};
        \draw [shift={(0,0)},left color=white, right color=black,  ] (0,0) rectangle (1,1);
    
        \draw [shift={(1.5,0)}] (0.5,1.2) node [scale=1.5] {$T_2$};
        \draw [shift={(1.5,0)},|-|] (0,0) node [below] {0} -- (1,0) node [midway,yshift=-0.75cm,scale=1.5] {$t$} node [below] {1};
        \draw [shift={(1.5,0)},|-|] (0,0) node [left]  {0} -- (0,1) node [midway,xshift=-0.75cm,scale=1.5]  {$c$} node [left]  {1};
        \draw [shift={(1.5,0)},left color=white, right color=black,  ] (0,0) rectangle (1,1);
        \shade [shift={(1.5,0)},bottom color=black, top color=gray,  ] (0.5,0) rectangle (1,0.5);
    
        \draw [shift={(-0.75,-3.5)}](0.5,1.2) node [scale=1.5] {$C_1$};
        \draw [shift={(-0.75,-3.5)},|-|] (0,0) node [below] {0} -- (1,0) node [midway,yshift=-0.75cm,scale=1.5] {$t$} node [below] {1};
        \draw [shift={(-0.75,-3.5)},|-|] (0,0) node [left]  {0} -- (0,1) node [midway,xshift=-0.75cm,scale=1.5]  {$c$} node [left]  {1};
        \draw [shift={(-0.75,-3.5)},bottom color=white, top color=black,  ] (0,0) rectangle (1,1);
        
        \draw [shift={(0.75,-3.5)}](0.5,1.2) node [scale=1.5] {$C_2$};
        \draw [shift={(0.75,-3.5)},|-|] (0,0) node [below] {0} -- (1,0) node [midway,yshift=-0.75cm,scale=1.5] {$t$} node [below] {1};
        \draw [shift={(0.75,-3.5)},|-|] (0,0) node [left]  {0} -- (0,1) node [midway,xshift=-0.75cm,scale=1.5]  {$c$} node [left]  {1};
        \draw [shift={(0.75,-3.5)},bottom color=white, top color=black,  ] (0,0) rectangle (1,1);
        \shade [shift={(0.75,-3.5)},left color=black, right color=gray,  ] (0,0.5) rectangle (0.5,1);
        
        \draw [shift={(2.25,-3.5)}](0.5,1.2) node [scale=1.5] {$C_3$};
        \draw [shift={(2.25,-3.5)},|-|] (0,0) node [below] {0} -- (1,0) node [midway,yshift=-0.75cm,scale=1.5] {$t$} node [below] {1};
        \draw [shift={(2.25,-3.5)},|-|] (0,0) node [left]  {0} -- (0,1) node [midway,xshift=-0.75cm,scale=1.5]  {$c$} node [left]  {1};
        \draw [shift={(2.25,-3.5)},bottom color=white, top color=black,  ] (0,0) rectangle (1,1);
        \fill [shift={(2.25,-3.5)},fill=black] (0,0) -- (1,1) -- (0,1);
    
        \draw [shift={(0.75,-1.75)}] (0.5,1.2) node [scale=1.5] {$\tilde T$};
        \draw [shift={(0.75,-1.75)},|-|] (0,0) node [below] {0} -- (1,0) node [midway,yshift=-0.75cm,scale=1.5] {$t$} node [below] {1};
        \draw [shift={(0.75,-1.75)},|-|] (0,0) node [left]  {0} -- (0,1) node [midway,xshift=-0.75cm,scale=1.5]  {$c$} node [left]  {1};
        \draw [shift={(0.75,-1.75)},left color=white, right color=black,  ] (0,0) rectangle (1,1);
        \shade [shift={(0.75,-1.75)},bottom color=white, top color=black,  ] (0,0) -- (1,1) -- (1,0);
    
    \end{tikzpicture}
    
    \caption{Illustration of the definitions of the random variables of the technical example. Here, white is 0, black is 1, and gray is in between with darker meaning closer to 1.}
    \label{fig:distributions}
\end{figure}

\begin{table}[htbp]
    \centering
    \caption{Assumptions met for various combinations of $T_i$ and $C_j$.}
    \makebox[\linewidth]{
    \begin{tabular}{lcccccc} \addlinespace[12pt] \toprule
         Assumption & $(T_1,C_1)$ & $(T_1,C_2)$ & $(T_1,C_3)$ & $(T_2,C_1)$ & $(T_2,C_2)$ & $(T_2,C_3)$ \\ \midrule
        Identifiability          & \checkmark & \checkmark & \checkmark & \checkmark  & \checkmark & \checkmark   \\
        Representativity         & \checkmark & \checkmark & \checkmark &             &            &              \\
        Cens.\ identifiability   & \checkmark & \checkmark &            & \checkmark  & \checkmark &              \\
        Cens.\ representativity  & \checkmark &            &            & \checkmark  &            &              \\\midrule
        Pointwise independence   & \checkmark & \checkmark &            & \checkmark  & \checkmark &              \\
        Full independence        & \checkmark &            &            &             &            &              \\\bottomrule
    \end{tabular}
    \label{tab:example_table}
    }
\end{table}

The primary idea behind these examples is that with basis in independent $T_1$ and $C_1$, we can alter the unobserved parts of the underlying event and censoring time without altering the observed $(\tilde T, \tilde D)$. If the event time is left unaltered but the unobserved part of the censoring time is altered arbitrarily, the representativity property is retained. If the marginal distribution is retained as is the case for $T_2$ and $C_2$, the identifiability property is retained.

\subsection{A practical setting}

As an illustration of a practical setting, we can consider the following example of a register-based study. 
Suppose we are interested in studying the cumulative incidences of different causes of death in a certain population. In this case, we can let $(T,D)$ denote the pair of time of death and cause of death for a randomly picked member of the population.
Imagine that we have information on age and cause of death of population members except in the case of emigration from the population. In other words, we have information on $\tilde T \leq T$, which equals $T$ if the time of death is observed and is the time of emigration otherwise, and $\tilde D = D \indic{\tilde T = T}$, which is the cause of death if the time of death is observed and 0, denoting emigration, otherwise.
As can be seen from Proposition~\ref{prop:existence} of the appendix, data on the observed pair $(\tilde T, \tilde D)$ alone does not allow us to refute the idea that $(\tilde T, \tilde D)$ is produced by $(T,D)$ and a time to emigration $C$ that are independent, $C \independent (T,D)$.
However, in this case, common sense tells us that emigration cannot happen after death so the time to emigration $C$ can never be independent of $(T,D)$. Instead, one should rather define $C= \infty$ when $\tilde D \neq 0$ or, as in Section~\ref{sec:given_event_time}, simply not trouble oneself with defining a time to emigration for all individuals.

Suppose now we are interested in estimating the cumulative incidence proportion $F_j(t)$ for various time points $t \in \J$ for the different causes of death $j=1, \dots, d$. 
In the end, the problem of defining a time to emigration has no bearing on the validity of the Aalen--Johansen estimator as an estimate of $F_j(t)$. We instead require the identifiability property relating $(T, D)$ to $(\tilde T, \tilde D)$ directly as laid out in Proposition~\ref{prop:weak_censoring}.
In terms of the identity of forces of mortality property, this requirement has the interpretation that the observable hazard of any of the causes of death as represented by $\tilde H_j$ should equal the underlying hazard of the same cause as represented by $H_j$ on the relevant time interval.
In terms of the status-independent observation property, the requirement has the interpretation that the status of survival up to any time point and the status of death of a certain cause at the same time point are equally likely to be observed, that is, the probability of not emigrating before that time point given the status does not depend on the status.

If we are instead interested in using the Aalen--Johansen estimator for prognosis, we need a stronger assumption. Suppose we are looking at population members that are alive and have not emigrated at time point $s$ and we are interested in estimating the probabilities of dying of the different causes before time $t > s$. In other words, we are interested in estimating $\P(T \leq t, D = j \given \tilde T > s)$. Under the identifiability assumption, a valid estimate of $\P(T \leq t, D = j \given T > s) = (F_j(t) - F_j(s))/S(s)$ can be obtained based on the Aalen--Johansen and related Kaplan--Meier estimator.
In order for this estimate to be a valid estimate of $\P(T \leq t, D = j \given \tilde T > s)$ as well, an assumption of the non-prognostic observation property from Proposition~\ref{prop:strong_censoring} is needed. That is, we require the stronger representativity property to hold. By the equivalence to non-prognostic censoring, this entails that population members emigrating at time $s$ have the same probabilities of dying of certain causes as members that are alive at time $s$. 

The representativity assumption also implies the existence of a censoring time $C$ independent of $(T,D)$ which corresponds to the time to emigration for individuals who emigrate. It may be useful to think in terms of such a $C$, but its value for individuals who are not observed to be emigrating should not, at least without further assumptions, be confused with a counterfactual emigration time that would have been observed if death had not occurred beforehand. In fact, the censoring time $C$ may not have any relevant interpretation for individuals who are observed to die.

In register-based studies, censoring at end of follow up may be much more prominent than, for instance, censoring by emigration from the population. End of follow-up is an example of a censoring time that may defined explicitly without consideration of the underlying $(T,D)$. This extra piece of information can be used to judge whether censoring identifiability and censoring representativity are appropriate, but these properties do not help us in judging the validity of the representativity or identifiability properties related to $(T,D)$ and thus the validity of the Aalen--Johansen and Kaplan--Meier estimators.

\section{Discussion}
\label{sec:discussion}

When no given, underlying censoring time is considered, the assumptions that we have studied that ensure consistency of the Kaplan--Meier and Aalen--Johansen estimators fall in two categories: an identifiability assumption and a representativity assumption. Although the properties within one category are all equivalent, they are quite different in their interpretations and hence some may be easier to communicate to a clinical researcher than others. Which interpretation is most suitable is a matter of preference but it seems to us that the properties of status-independent observation and non-prognostic observation are much easier to interpret and potentially refute than, for example, the corresponding martingale properties.

The appropriateness of either assumption cannot be assessed based on information on the exit time and exit type alone as is seen from Proposition~\ref{prop:existence} of the appendix, which ensures the existence of an event time and type that realize the observed exit time and exit type and at the same time satisfy the representativity assumption. This is in a similar vein to the result by \cite{molenberghs2008} that one cannot distinguish between missing-at-random and missing-not-at-random models based on only the observed data.
Consequently, any information used to the assess the validity of the identifiability or representativity assumption must come from an external source. 

Other properties than the ones treated in this paper have been considered in the literature. \cite{ebrahimi2003} considered a certain property and proceeded to argue its equivalence to the constant-sum property.
\cite{jacobsen1989right} considered three nested properties in a setting where the observed censoring times need not be independent and identically distributed, see his Proposition~3.4. 

It appears that these three properties all translate into equivalents of the representativity property in our setting.

Our focus has been on marginal distributions, but in regression analysis in a survival analysis context, a similar question of necessary assumptions on the censoring mechanism is highly relevant. 
Seemingly, versions of the assumptions studied here in the conditional distribution given covariates of a regression model are useful in this respect.

\section*{Acknowledgements}
The authors would like to thank an anonymous referee for valuable comments that have greatly improved the manuscript.
Morten Overgaard is supported by the Novo Nordisk Foundation grant NNF17OC0028276.

\appendix

\section*{Appendix 1}
\subsection*{Technical results}

Consider the matrix $\mat{H}$ defined in \eqref{eq:h_matrix}. We then have the following characterization of the product integral $\prodi (\mat{I}+\d\mat{H})$.

\begin{lemma} \label{lemma:prodi_dH}
Consider a $(d+1) \times (d+1)$ matrix-valued function given by $\mat B(s,t) = \{\beta_{i,j}(s,t)\}$ for $s, t \geq 0$ which is right continuous with left limits in both variables. Then, for given $s \geq 0$, $\mat B(s,t) = \prodi_s^t(\identmat+\mat H(\d u))$ for all $t \in [s, \infty) \cap \{t : S(t) > 0\}$ if and only if
\begin{equation*}
    \beta_{1,j+1}(s,t) = \int_s^t \beta_{1,1}(s,u-) \d H_j(u)
\end{equation*}
for $j=1, \dots, d$, $\beta_{1,1}(s,t) = 1 - \sum_{j=1}^d \beta_{1,j+1}(s,t)$, and $\beta_{i,j}(s,t) = \indic{i=j}$ for $i = 2, \dots, d+1$ for all $t \in [s, \infty) \cap \{t : S(t) > 0\}$.
\end{lemma}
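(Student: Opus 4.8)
The plan is to prove the two implications separately, using the defining properties of the product integral $\prodi_s^t(\identmat + \mat H(\d u))$ as the unique solution of an integral equation. First I would recall that, since $\mat H$ is of bounded variation on compacts, the matrix product integral $\mat A(s,t) = \prodi_s^t(\identmat + \mat H(\d u))$ is the unique right-continuous-with-left-limits (in $t$) solution of the Volterra equation $\mat A(s,t) = \identmat + \int_s^t \mat A(s,u-)\, \mat H(\d u)$ (equivalently the backward equation $\mat A(s,t) = \identmat + \int_s^t \mat H(\d u)\, \mat A(u,t)$); this is the characterization from \cite{Gill1990} already invoked in the proof of Proposition~\ref{prop:consistency}. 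Writing this equation out componentwise against the specific lower-block-trivial structure of $\mat H$ in \eqref{eq:h_matrix} — whose only nonzero row is the first, with entries $-H, H_1, \dots, H_d$ — immediately collapses it to scalar equations in the first row only.

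For the ``only if'' direction, I would simply substitute $\mat B(s,t) = \mat A(s,t)$ into the Volterra equation. The $(i,j)$ entry for $i \geq 2$ reads $\beta_{i,j}(s,t) = \indic{i=j} + 0$ because the $i$-th row of $\mat H$ vanishes, giving $\beta_{i,j}(s,t) = \indic{i=j}$. The $(1, j+1)$ entry for $j = 1, \dots, d$ reads $\beta_{1,j+1}(s,t) = 0 + \int_s^t \beta_{1,1}(s,u-)\, H_j(\d u)$ since the only way to land in column $j+1$ via one step of $\mat H$ is from column $1$ through the entry $H_j$. Finally the $(1,1)$ entry gives $\beta_{1,1}(s,t) = 1 - \int_s^t \beta_{1,1}(s,u-)\, H(\d u) = 1 - \sum_{j=1}^d \beta_{1,j+1}(s,t)$, using $H = \sum_j H_j$. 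That is exactly the claimed list of identities, valid wherever the product integral is defined, in particular on $[s,\infty) \cap \{t : S(t) > 0\}$.

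For the ``if'' direction, I would run the argument in reverse: assume the stated identities hold for all $t \in [s,\infty) \cap \{t : S(t) > 0\}$, and verify that $\mat B(s, \cdot)$ then satisfies the same Volterra equation that uniquely characterizes $\prodi_s^\cdot(\identmat + \mat H(\d u))$. The rows $i \geq 2$ are constant in $t$ and equal to $\identmat$'s rows, hence trivially solve their (trivial) equations; the first-row entries solve the scalar Volterra equations displayed above by hypothesis; and $\beta_{1,1}$ is pinned down as $1$ minus the sum. Since $\mat B(s, \cdot)$ is assumed right continuous with left limits, uniqueness of the solution (again \cite{Gill1990}, Theorem~3) forces $\mat B(s,t) = \prodi_s^t(\identmat + \mat H(\d u))$ on that set. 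Strictly, to apply the uniqueness theorem one wants the candidate to be of bounded variation; this follows because each $\beta_{1,j+1}(s, \cdot)$ is an indefinite integral against the finite measure $H_j$ restricted to $[s,t]$ (finite since $S(t) > 0$ means $H(t) < \infty$), hence of bounded variation, and the remaining entries are constant.

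The main obstacle I anticipate is purely bookkeeping: making the componentwise collapse of the matrix Volterra equation fully rigorous given the degenerate structure of $\mat H$, and being careful that all statements are qualified by the restriction to $\{t : S(t) > 0\}$ — the set where $H$ (and hence the product integral) is finite and well-behaved — rather than all of $[0,\infty)$. There is no analytic difficulty beyond invoking existence and uniqueness for bounded-variation Volterra equations, which the paper already treats as available.
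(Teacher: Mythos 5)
Your route is the same as the paper's: both rest on the Gill--Johansen characterization of $\prodi_s^t(\identmat+\mat H(\d u))$ as the unique solution of the forward Volterra equation $\mat B(s,t)-\identmat=\int_s^t \mat B(s,u-)\,\mat H(\d u)$ (Theorem~5 of Gill and Johansen), followed by a componentwise reading of that equation. The ``if'' direction and the first-row computations are fine, as is your remark that the candidate must be of bounded variation.

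There is, however, one step whose justification does not hold up, and it happens to be the only nontrivial content of the paper's written proof. You assert that in the forward equation the $(i,j)$ entry for $i\geq 2$ reads $\beta_{i,j}(s,t)=\indic{i=j}+0$ ``because the $i$-th row of $\mat H$ vanishes.'' That reasoning applies to the backward equation, where the increment enters as $\mat H(\d u)\mat A(u,t)$; in the forward equation the increment is $\mat B(s,u-)\mat H(\d u)$, whose $(i,j)$ entry picks out column $j$ of $\mat H$, and since only the first row of $\mat H$ is nonzero this entry equals $\int_s^t\beta_{i,1}(s,u-)\,(\mat H)_{1,j}(\d u)$ --- not obviously zero. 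Concretely, for $i\geq 2$ the forward equation gives $\beta_{i,1}(s,t)=-\int_s^t\beta_{i,1}(s,u-)\,H(\d u)$ and $\beta_{i,j+1}(s,t)=\indic{i=j+1}+\int_s^t\beta_{i,1}(s,u-)\,H_j(\d u)$; you must first argue that the homogeneous Volterra equation forces $\beta_{i,1}\equiv 0$ (the paper cites Theorem~10 of Gill and Johansen for this) before the remaining entries collapse to $\indic{i=j}$. The repair is short: either supply that uniqueness argument, or, for the ``only if'' direction alone, observe that each factor $\identmat+\mat H(\d u)$ has $i$-th row equal to the $i$-th unit row vector for $i\geq 2$ and that this property is preserved under matrix multiplication, so the product integral inherits it.
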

\begin{proof}
This is a special case of Theorem~5 of \cite{Gill1990}, which establishes that $\mat B(s,t) = \prodi_s^t(\identmat+\mat H(\d u))$ if and only if the forward equation $\mat B(s,t) - \identmat = \int_s^t \mat B(s,u-) \mat H(\d u)$ for all $t \in [s, \infty) \cap \{t : S(t) > 0\}$ holds. The only solutions to the equations $\beta_{i,1}(s,t) = - \int_s^t \beta_{i,1}(s, u-)  H(\d u)$ for all $t \in [s, \infty) \cap \{t : S(t) > 0\}$ for $i=2, \dots, d+1$ implied by the forward equation, are $\beta_{i,1}(s,t) = 0$, see for instance Theorem~10 of \cite{Gill1990}. This, in turn, implies that $\beta_{i,j}(s,t) = \indic{i=j}$ for $i=2, \dots, d+1$ for $\mat B$ to be a solution to the forward equation.
\end{proof}

In the following, we give some useful identities in the setup of Section~\ref{sec:given_censoring_time} where event time $T$, event type $D$, and censoring time $C$ are all given and we observe $\tilde T=T\wedge C$ and $\tilde D=D\indic{T\leq C}$. The identities not involving the $C$ may, however, also be used in the setting of Section~\ref{sec:given_event_time} where a censoring time $C$ is not explicitly given.
\begin{lemma}
The equalities
\begin{equation} \label{eq:prob_vs_B}
    \P(\tilde T < t \given T \geq t) = B(t) + \int_0^{t-} \frac{\tilde S(s-)}{S(s)} (\tilde H - H)(\d s)
\end{equation}
and
\begin{equation}
    \label{eq:prob_vs_B2}
    \P(T \leq t  \given C \geq t) = \sum_{j=1}^d \int_0^t \frac{1}{K(s-)} \tilde F_j(\d s) + \int_0^{t-} \frac{\check S(s)}{K(s)} (\check H_0 - H_0)(\d s)
\end{equation}
hold for all $t \in \J$.
\end{lemma}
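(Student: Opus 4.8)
The plan is to prove each identity by first rewriting the conditional probability on the left as $1$ minus a ratio of (left-limited) survival functions, then representing each survival function as a product integral of its total cumulative hazard and applying Duhamel's equation from the product-integral calculus of \cite{Gill1990}, and finally identifying the resulting integrals with the stated terms.

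For \eqref{eq:prob_vs_B}: since $\tilde T \le T$ we have $\{T < t\} \subseteq \{\tilde T < t\}$, so $\P(\tilde T < t, T \ge t) = \P(\tilde T < t) - \P(T < t) = S(t-) - \tilde S(t-)$; dividing by $\P(T \ge t) = S(t-)$, which is positive for $t \in \J$ because $\tilde S(t-) \le S(t-)$ and $\tilde S(t-) \ge \tilde S(t) > 0$, gives $\P(\tilde T < t \given T \ge t) = 1 - \tilde S(t-)/S(t-)$. I would then write $S(t) = \prodi_0^t(1 - H(\d s))$ and $\tilde S(t) = \prodi_0^t(1 - (\tilde H + \tilde H_0)(\d s))$, apply Duhamel's equation to $\tilde S - S$, divide by $S$, and pass to the left limit at $t$ within $\J$, obtaining
\begin{equation*}
1 - \frac{\tilde S(t-)}{S(t-)} = \int_0^{t-} \frac{\tilde S(s-)}{S(s)}\, \tilde H_0(\d s) + \int_0^{t-} \frac{\tilde S(s-)}{S(s)}\, (\tilde H - H)(\d s).
\end{equation*}
Since $\tilde S(s-)\,\tilde H_0(\d s) = \tilde F_0(\d s)$ by definition of $\tilde H_0$, the first integral is exactly $B(t)$, which proves \eqref{eq:prob_vs_B}.

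For \eqref{eq:prob_vs_B2}: the same reduction gives $\P(T \le t \given C \ge t) = 1 - \check S(t)/K(t-)$, using $\check S(t) = \P(T > t, C \ge t)$ and $K(t-) = \P(C \ge t)$, which is positive for $t \in \J$ since $0 < \tilde S(t-) \le K(t-)$. Here I would use the product structure $\check S(t) = \prodi_0^t(1 - \tilde H(\d s))\,\prodi_0^{t-}(1 - \check H_0(\d s))$ — a consequence of \eqref{eq:productstruc_stilde} and $\check S(t) = \tilde S(t-)(1 - \Delta \tilde H(t))$ — together with $K(t-) = \prodi_0^{t-}(1 - H_0(\d s))$. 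Writing $A(t) = \prodi_0^t(1 - \tilde H(\d s))$ and $V(s) = \prodi_0^{s-}(1 - \check H_0(\d u))$, Duhamel's equation relating $\prodi_0^{t-}(1 - \check H_0(\d s))$ to $\prodi_0^{t-}(1 - H_0(\d s))$ yields
\begin{equation*}
\frac{\check S(t)}{K(t-)} = A(t) + A(t)\int_0^{t-} \frac{V(s)}{K(s)}\, (H_0 - \check H_0)(\d s).
\end{equation*}
Then, using the forward equation $1 - A(t) = \sum_{j=1}^d \int_0^t A(s-)\,\tilde H_j(\d s)$ together with $\tilde S(s-) = A(s-)V(s)$ and $\tilde S(s-)\,\tilde H_j(\d s) = \tilde F_j(\d s)$, one rewrites $1 - A(t) = \sum_{j=1}^d \int_0^t V(s)^{-1}\,\tilde F_j(\d s)$; subtracting $\sum_{j=1}^d \int_0^t K(s-)^{-1}\,\tilde F_j(\d s)$, applying Duhamel once more to express $K(s-) - V(s)$, and interchanging the order of the resulting iterated integrals, the correction terms recombine into $\int_0^{t-} \check S(s)\,K(s)^{-1}\,(\check H_0 - H_0)(\d s)$, which proves \eqref{eq:prob_vs_B2}.

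The hard part is the bookkeeping in the second identity. Because $T$ takes priority over $C$ in $\tilde T = T \wedge C$, the function $\check S$ is in general discontinuous from both sides at atoms of $\tilde H$ and of $\tilde F_0$, so it cannot be treated as a single survival function; one must carry the modified hazard $\check H_0$ and the half-open product $\prodi_0^{t-}(1 - \check H_0(\d u))$ through the computation, track the fact that a jump of $\check H_0$ at $s$ affects $\check S(t)$ only for $t > s$, and verify that the several correction terms recombine into precisely the stated right-hand side. Heuristically, \eqref{eq:prob_vs_B2} is \eqref{eq:prob_vs_B} for a reversed-priority version of the model in which $C$ beats $T$, and the passage from $\tilde H_0$ to $\check H_0$ is exactly what compensates for the priority rule.
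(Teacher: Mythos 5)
Your proof is correct. For \eqref{eq:prob_vs_B} it coincides with the paper's argument: Duhamel applied to $S$ versus $\tilde S$, passage to left limits, division by $S(t-)>0$, and identification of the $\tilde H_0$-part of the integrand with $B(t)$. For \eqref{eq:prob_vs_B2} you take a mildly different route through the same machinery. The paper applies Duhamel to $\tilde S$ versus $K$ (comparing $\tilde H_0+\tilde H$ with $H_0$), which yields an intermediate formula in $\tilde S(s-)$ and $\tilde H_0$ plus a boundary term $\sum_{j}\Delta\tilde F_j(t)/K(t-)$, and only at the end converts to $\check S$ and $\check H_0$ via the identities $\check S(s)\check H_0(\d s)=\tilde S(s-)\tilde H_0(\d s)$ and $\tilde S(s-)-\check S(s)=\sum_j\Delta\tilde F_j(s)$, which together absorb the boundary term and shift $K(s)^{-1}$ to $K(s-)^{-1}$ in the $\tilde F_j$-integrals. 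You instead factor $\check S(t)=A(t)V(t)$ with $A(t)=\prodi_0^t(1-\tilde H(\d s))$ and $V(t)=\prodi_0^{t-}(1-\check H_0(\d s))$, apply Duhamel to $V$ versus $K(\cdot-)$ directly, and then use the forward equation for $A$ plus a Fubini interchange; the recombination you describe does go through, because $V(u)^{-1}-K(u-)^{-1}=V(u)^{-1}\int_0^{u-}V(s)K(s)^{-1}(\check H_0-H_0)(\d s)$ is the same Duhamel identity evaluated at $u$, so the iterated integral collapses to $\int_0^{t-}A(s)V(s)K(s)^{-1}(\check H_0-H_0)(\d s)=\int_0^{t-}\check S(s)K(s)^{-1}(\check H_0-H_0)(\d s)$. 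Your version handles the atoms of $\tilde H$ implicitly through the factorization rather than by explicit correction terms at the end, at the cost of one extra Fubini step; both arguments are sound and rest on the same two ingredients, the Duhamel and forward equations of \cite{Gill1990}.
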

\begin{proof}
Since the product integral structures $S(t)/S(s) = \prodi_s^t(1-H(\d u))$ and $\tilde S(t)/ \tilde S(s) = \prodi_s^t(1- (\tilde H_0 + \tilde H)(\d u))$ hold, the equality $S(t) - \tilde S(t) = \int_0^t S(t) S(s)^{-1} \tilde S(s-) (\tilde H_0 + \tilde H - H)(\d s)$ holds according to the Duhamel equation, see Theorem~6 of \cite{Gill1990}. 
Note that $S(t-) - \tilde S(t-) = \P(\tilde T < t, T \geq t)$ and recall that $B(t) = \int_0^{t-}S(s)^{-1} \tilde F_0(\d s) = \int_0^{t-}\tilde S(s-) S(s)^{-1} \tilde H_0(\d s)$ to obtain the equality~\eqref{eq:prob_vs_B}. 

A similar argument leads to $\P(T \leq t  \given C \geq t) = \sum_{j=1}^d \Delta F_j(s) / K(t-) + \sum_{j=1}^d \int_0^{t-} K(s)^{-1} \tilde F_j(\d s) + \int_0^{t-} \tilde S(s-) K(s)^{-1} (\tilde H_0 - H_0)(\d s)$. The equality~\eqref{eq:prob_vs_B2} now follows by realizing that $\int_0^{t-} \check S(s) K(s)^{-1} \check H_0(\d s) = \int_0^{t-} \tilde S(s-) K(s)^{-1} \tilde H_0(\d s)$ and $\int_0^{t-} (\tilde S(s-)- \check S(s)) K(s)^{-1} H_0(\d s) = \sum_{j=1}^d \sum_{s < t} \Delta \tilde F_j(s) \Delta H_0(s) / K(s)  = \sum_{j=1}^d \int_0^{t-} (K(s-)^{-1}-K(s)^{-1}) \tilde F_j(\d s)$.
\end{proof}

\begin{lemma}
The equalities
\begin{equation} \label{eq:B_vs_prob}
    B(t) = \P(\tilde T < t \given  T \geq t) + \frac{1}{S(t-)} \sum_{j=1}^d \int_0^{t-} (1 - a_j(s) - B(s)) F_j(\d s)
\end{equation}
and
\begin{align}
    \label{eq:B_vs_prob2}
    \sum_{j=1}^d \int_0^t \frac{1}{K(u-)} \tilde F_j(\d u) &= P(T\leq t\given C\geq t) \nonumber\\
    &\phantom{{}=} + \frac{1}{K(t-)} \int_0^{t-}\Big(1- \P(\tilde T = s, \tilde D = 0 \given C = s) \\
    &\phantom{{}=} - \sum_{j=1}^d \int_0^s \frac{1}{K(u-)} \tilde F_j(\d u) \Big) G(\d s) \nonumber
\end{align}
hold for all $t \in \J$.
\end{lemma}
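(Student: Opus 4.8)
The plan is to deduce \eqref{eq:B_vs_prob} from the already-established identity \eqref{eq:prob_vs_B} by rewriting its correction term as an integral against the $F_j$ and then performing an integration by parts that pulls the factor $S(s)^{-1}$ outside, turning it into $S(t-)^{-1}$; equation \eqref{eq:B_vs_prob2} is then to be obtained from \eqref{eq:prob_vs_B2} by the same reasoning with the roles of $T$ and $C$ interchanged.

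For \eqref{eq:B_vs_prob}, I would rewrite \eqref{eq:prob_vs_B} as $B(t) - \P(\tilde T < t \given T \geq t) = -\int_0^{t-} \frac{\tilde S(s-)}{S(s)}(\tilde H - H)(\d s)$. Using $\tilde H_j(\d s) = \tilde S(s-)^{-1}\tilde F_j(\d s) = \tilde S(s-)^{-1} a_j(s) F_j(\d s)$, $H_j(\d s) = S(s-)^{-1} F_j(\d s)$, and $\tilde S(s-)/S(s-) = \P(\tilde T \geq s \given T \geq s)$, a termwise computation gives
\[
    \frac{\tilde S(s-)}{S(s)}(\tilde H_j - H_j)(\d s) = -\frac{1}{S(s)}\bigl(1 - a_j(s) - \P(\tilde T < s \given T \geq s)\bigr)\, F_j(\d s),
\]
so that, writing $L(t) := B(t) - \P(\tilde T < t \given T \geq t)$, we have $L(t) = \sum_{j=1}^d \int_0^{t-} S(s)^{-1}\bigl(1 - a_j(s) - \P(\tilde T < s \given T \geq s)\bigr) F_j(\d s)$. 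The right-continuous modification of the (left-continuous) function $L$ therefore has Lebesgue--Stieltjes measure $S(s)^{-1}\sum_{j=1}^d \bigl(1 - a_j(s) - \P(\tilde T < s \given T \geq s)\bigr) F_j(\d s)$, and $B(s) = L(s) + \P(\tilde T < s \given T \geq s)$. Applying the product rule for right-continuous functions of bounded variation to $S$ times this modification of $L$, over the half-open interval $(0,t)$ — so that the jump of $S$ at the endpoint $t$ is absorbed and the factor in front becomes $S(t-)$ rather than $S(t)$ — and using $S(\d s) = -\sum_{j=1}^d F_j(\d s)$, one is led to $S(t-) L(t) = \sum_{j=1}^d \int_0^{t-}\bigl(1 - a_j(s) - B(s)\bigr) F_j(\d s)$, which rearranges to \eqref{eq:B_vs_prob}.

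The derivation of \eqref{eq:B_vs_prob2} runs in parallel: start from \eqref{eq:prob_vs_B2}, rewrite its correction term using $\check H_0(\d s) = \check S(s)^{-1}\tilde F_0(\d s) = \check S(s)^{-1}\P(\tilde T = s, \tilde D = 0 \given C = s)\, G(\d s)$, $H_0(\d s) = K(s-)^{-1} G(\d s)$ and $\check S(s)/K(s-) = \P(T > s \given C \geq s)$, so that the correction becomes a $G$-integral of $K(s)^{-1}\bigl(1 - \P(\tilde T = s, \tilde D = 0 \given C = s) - \P(T \leq s \given C \geq s)\bigr)$, and then integrate by parts against $K(\d s) = -G(\d s)$, with $\sum_{j=1}^d \int_0^s K(u-)^{-1}\tilde F_j(\d u)$ taking over the role played by $B$ in the first part. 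The step I expect to demand the most care is precisely this integration by parts: one must use the form of the product rule over $(0,t)$ rather than $(0,t]$, so that the endpoint jump $\Delta S(t)$, respectively $\Delta K(t)$, is absorbed and the prefactor comes out as $S(t-)^{-1}$, respectively $K(t-)^{-1}$; and for \eqref{eq:B_vs_prob2} there is the extra bookkeeping introduced by the modification $\check H_0$ of $\tilde H_0$, which forces one to keep $\check S(s)$, $\tilde S(s)$ and $\tilde S(s-)$ carefully distinct throughout the computation.
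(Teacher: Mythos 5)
Your argument is correct, but it takes a genuinely different route from the paper's. The paper proves \eqref{eq:B_vs_prob} directly and self-containedly: it combines the decompositions $\sum_{j=1}^d F_j(t-) = 1 - S(t-)$ and $\tilde S(t-)+\sum_{j=0}^d\tilde F_j(t-)=1$ with $\tilde F_j(\d s)=a_j(s)F_j(\d s)$ and a single change of order of integration showing $\sum_{j=1}^d\int_0^{t-}B(s)\,F_j(\d s)=\tilde F_0(t-)-S(t-)B(t)$, then divides by $S(t-)$; no integration by parts and no appeal to \eqref{eq:prob_vs_B} is needed, and \eqref{eq:B_vs_prob2} is handled analogously. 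You instead invert \eqref{eq:prob_vs_B}: your termwise rewriting of its correction term as $\sum_{j=1}^d\int_0^{t-}S(s)^{-1}\bigl(1-a_j(s)-\P(\tilde T<s\given T\geq s)\bigr)F_j(\d s)$ is correct (using $\tilde S(s-)/S(s-)=\P(\tilde T\geq s\given T\geq s)$), and after substituting $\P(\tilde T<s\given T\geq s)=B(s)-L(s)$ the function $L$ satisfies a Volterra-type equation which your integration by parts solves in closed form. I checked that the step you flag as delicate does go through: in the product rule for $S$ times the right-continuous version $\tilde L$ over $(0,t)$, the factor $S(s-)/S(s)$ arising in $\int S(s-)\,\tilde L(\d s)$ produces jump corrections that exactly cancel the discrepancy between $\tilde L(s)$ and $L(s)$ in $\int \tilde L(s)\,S(\d s)$, leaving $S(t-)L(t)=\sum_{j=1}^d\int_0^{t-}(1-a_j(s)-B(s))F_j(\d s)$ as claimed; the same bookkeeping works for \eqref{eq:B_vs_prob2} with $K$, $G$ and $\check S$ in place of $S$, $F_j$ and $\tilde S$. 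What your route buys is a conceptual point --- it exhibits \eqref{eq:B_vs_prob} as the exact inverse of \eqref{eq:prob_vs_B} --- at the price of a more delicate Stieltjes calculus; the paper's route buys brevity and independence from the preceding lemma.
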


\begin{proof}
We know that $\sum_{j=1}^d F_j(t-) = 1 - S(t-)$ and similarly that $\tilde S(t-) + \sum_{j=0}^d \tilde F_j(t-) = 1$. Since $\tilde F_j(s) = \int_0^s a_j(u) F_j(\d u)$, we have $1-\sum_{j=1}^d \int_0^{t-} a_j(s) F_j(\d s) = \tilde S(t-) + \tilde F_0(t-)$. Recall that $B(s) = \int_0^{s-} S(u)^{-1} \tilde F_0(\d u)$. A change in the order of integration reveals that $\tilde F_0(t-) - \sum_{j=1}^d \int_0^{t-} B(s)  F_j(\d s) = \int_0^{t-} \P(T \geq t \given T > u) \tilde F_0(\d u) = S(t-) B(t)$, where the definition of $B$ is used once more.
Put together, this establishes that the equality
\begin{equation*}
    \sum_{j=1}^d \int_0^{t-} (1 - a_j(s) - B(s)) F_j(\d s) = \tilde S(t-) - S(t-) + S(t-) B(t)
\end{equation*}
holds for all $t \in \J$.
Now the desired result follows since $\P(\tilde T < t \given T \geq t) = (S(t-) - \tilde S(t-))/S(t-)$.
As for the second equality, similar arguments lead to $\sum_{j=1}^d \int_0^{t-} K(u-)^{-1} \tilde F_j(\d u) = (1- \tilde S(t-)K(t-)^{-1}) + K(t-)^{-1} \int_0^{t-} (1- \P(\tilde T = s, \tilde D = 0 \given C = s) - \sum_{j=1}^d \int_0^s K(u-)^{-1} \tilde F_j(\d u) ) G(\d s)$ and \eqref{eq:B_vs_prob2} then follows since $\P(T \leq t \given C \geq t) = (1- \check S(t) K(t-)^{-1}) = (1- \tilde S(t-) K(t-)^{-1}) + \sum_{j=1}^d \Delta \tilde F_j(t) / K(t-)$.
\end{proof}

\begin{lemma}
The equality 
\begin{equation} \label{eq:general_vs_cond_fixed}
  \begin{aligned}
    & \phantom{{}=} \P(T \leq t, D = j \given T > s) - \P(T \leq t, D = j \given \tilde T > s) \\
    &= \int_s^t F_j(t \given u) \frac{\tilde S(u-)}{\tilde S(s)} (\tilde H - H)(\d u)  + \int_s^t \frac{\tilde S(u-)}{\tilde S(s)} (H_j - \tilde H_j)(\d u) \\
    & \phantom{{}=} + \frac{1}{\tilde S(s)} \int_s^t \big ( F_j(t \given u) - \P(T \leq t, D = j \given \tilde T = u, \tilde D = 0) \big ) \tilde F_0(\d u)
\end{aligned}  
\end{equation}
holds for all $t\geq 0$ and $s\in\J$ with $s<t$.
\end{lemma}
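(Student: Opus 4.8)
The plan is to first obtain a clean decomposition of $\P(T\leq t, D=j, \tilde T>s)$ and then, after multiplying the claimed identity through by $\tilde S(s)>0$ (recall $s\in\J$), reduce it to a statement that follows directly from the backward equation for the product integral $\mat P$.

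First I would split the event $\{T\leq t, D=j, \tilde T>s\}$ — on which automatically $\tilde T\leq T\leq t$ and $\tilde D\in\{0,j\}$ — according to whether $\tilde D=j$ or $\tilde D=0$. The $\tilde D=j$ part is exactly $\{s<\tilde T\leq t, \tilde D=j\}$, and on $\{\tilde T=u,\tilde D=0\}$ one has $T>u$, so conditioning yields
\begin{equation*}
\P(T\leq t, D=j, \tilde T>s) = \bigl(\tilde F_j(t)-\tilde F_j(s)\bigr) + \int_{(s,t]} \P(T\leq t, D=j\given \tilde T=u, \tilde D=0)\,\tilde F_0(\d u).
\end{equation*}
Using $\tilde F_j(\d u)=\tilde S(u-)\tilde H_j(\d u)$ and $\tilde F_0(\d u)=\tilde S(u-)\tilde H_0(\d u)$ (valid everywhere once division by $0$ is read as $0$, since these measures put no mass where $\tilde S(u-)=0$), multiplying \eqref{eq:general_vs_cond_fixed} by $\tilde S(s)$ and cancelling the $\tilde H_j$-term and the $\P(\,\cdot\given\tilde T=u,\tilde D=0)$-term that then occur on both sides, the identity reduces to
\begin{equation*}
\tilde S(s)\,F_j(t\given s) = \int_{(s,t]}\tilde S(u-)\,H_j(\d u) + \int_{(s,t]} F_j(t\given u)\,\tilde S(u-)\,(\tilde H_0+\tilde H)(\d u) - \int_{(s,t]} F_j(t\given u)\,\tilde S(u-)\,H(\d u),
\end{equation*}
where $F_j(t\given u)=\P(T\leq t, D=j\given T>u)=\mat P(u,t)_{1,j+1}$.

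Next I would eliminate the middle integral. From $\tilde S(v)=\prodi_0^v(1-(\tilde H_0+\tilde H)(\d w))$ and the forward equation (Theorem~5 of \cite{Gill1990}) we have $\tilde S(t)-\tilde S(s)=-\int_{(s,t]}\tilde S(u-)(\tilde H_0+\tilde H)(\d u)$, so the middle integral equals $-\int_{(s,t]}F_j(t\given u)\,\tilde S(\d u)$. Applying the Stieltjes product rule to $u\mapsto\tilde S(u)$ and $u\mapsto F_j(t\given u)$ on $(s,t]$ and using $F_j(t\given t)=0$ gives $-\tilde S(s)F_j(t\given s)=\int_{(s,t]}\tilde S(u-)\,F_j(t\given\d u)+\int_{(s,t]}F_j(t\given u)\,\tilde S(\d u)$, where $F_j(t\given\d u)$ denotes the increment at $u$ of $u\mapsto F_j(t\given u)$. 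Substituting both relations into the reduced identity makes $\tilde S(s)F_j(t\given s)$ cancel from both sides, leaving exactly
\begin{equation*}
\int_{(s,t]}\tilde S(u-)\,\bigl[\,H_j(\d u) + F_j(t\given\d u) - F_j(t\given u)\,H(\d u)\,\bigr] = 0.
\end{equation*}
Finally I would show the bracket vanishes as a measure: reading off the $(1,j+1)$-entry of the backward equation $\mat P(s,t)=\identmat+\int_{(s,t]}\mat H(\d u)\,\mat P(u,t)$ for $\mat P(s,t)=\prodi_s^t(\identmat+\mat H(\d u))$ (Theorem~5 of \cite{Gill1990}, with $\mat P(u,t)$ as in Lemma~\ref{lemma:prodi_dH}) gives $F_j(t\given s)=H_j(t)-H_j(s)-\int_{(s,t]}F_j(t\given u)\,H(\d u)$, i.e.\ $F_j(t\given\d u)=-H_j(\d u)+F_j(t\given u)\,H(\d u)$, which makes the bracket identically zero.

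I expect the real difficulty to be bookkeeping rather than algebra: getting the left-limit-versus-right-continuous-value conventions exactly right at atoms of $T$ and $C$ — the relation $F_j(t\given\d u)=-H_j(\d u)+F_j(t\given u)\,H(\d u)$ must hold with $F_j(t\given u)$ the right-continuous value, which is precisely what the backward equation supplies and what the Stieltjes product rule needs — and handling $u$ and $t$ outside $\J$ or where $S$ vanishes, where in every case the integrators place no mass and the division-by-$0$ convention keeps each expression well defined.
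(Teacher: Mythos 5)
Your proof is correct, and its opening step --- splitting $\P(T\leq t, D=j, \tilde T>s)$ according to whether $\tilde D=j$ or $\tilde D=0$ --- is exactly the paper's preliminary step. Where you genuinely diverge is in the main computation: the paper forms the difference $\P(T\leq t,D=j\given T>s)-\P(\tilde T\leq t,\tilde D=j\given \tilde T>s)$ and evaluates it in one stroke via the Duhamel equation (Theorem~6 of \cite{Gill1990}) applied in $d+2$ dimensions to the two transition matrices $\prodi_s^t(\identmat+(\tilde{\mat H}+\tilde{\mat H}_0)(\d u))$ and $\prodi_s^t(\identmat+\mat H(\d u))$, after which \eqref{eq:general_vs_cond_fixed} follows by substituting $\tilde F_0(\d u)=\tilde S(u-)\tilde H_0(\d u)$. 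You instead multiply through by $\tilde S(s)$, cancel the common terms, and verify the residual scalar identity using the forward equation for $\tilde S$, Stieltjes integration by parts, and the backward equation for $\mat P$ --- which is precisely the ``direct calculation'' the paper flags as an alternative to Duhamel. The two routes are close in content (Duhamel is essentially integration by parts for product integrals), but yours is more elementary and avoids the $d+2$-dimensional embedding, at the cost of more bookkeeping; the paper's version is shorter once Duhamel is on the table and makes the origin of the three correction terms (one each for $\tilde H-H$, $H_j-\tilde H_j$, and the censoring contribution) more transparent. Your closing caveats are handled correctly in your own argument: the backward equation does supply the right-continuous value $F_j(t\given u)$ in $F_j(t\given \d u)=-H_j(\d u)+F_j(t\given u)H(\d u)$, and all integrators are carried by $\{u:\tilde S(u-)>0\}\subseteq\{u:S(u-)>0\}$, which keeps every term finite even when $t$ lies beyond the support of $T$.
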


\begin{proof}
As a preliminary step, we have $\P(T \leq t, D = j \given \tilde T > s) - \P(\tilde T \leq t, \tilde D = j \given T > s) = \P(T \leq t, D = j, \tilde T \leq t, \tilde D = 0 \given \tilde T > s) = \P(\tilde T > s)^{-1} \int_s^t \P(T \leq t, D = j \given \tilde T = u, \tilde D = 0) \tilde F_0(\d u)$.
On the other hand, an application of the Duhamel equation in $d+2$ dimensions, or a direct calculation, reveals that $\P(T \leq t, D = j \given T > s) - \P(\tilde T \leq t, \tilde D = j \given \tilde T > s) = \int_s^t \P(T \leq t, D = j \given T > u) \tilde S(u-) \tilde S(s)^{-1} (\tilde H_0 + \tilde H - H)(\d u) + \int_s^t \tilde S(u-) \tilde S(s)^{-1} (H_j - \tilde H_j)(\d u)$. Subtract the first expression from the second expression to obtain \eqref{eq:general_vs_cond_fixed}.
\end{proof}
\begin{lemma}
The equalities
\begin{equation} \label{eq:cond_fixed_vs_constant_sum}
\begin{aligned}
    &\phantom{{}=}\int_0^t \big(F_j(t \given s) - \P(T \leq t, D = j \given \tilde T = s, \tilde D = 0) \big) \tilde F_0(\d s) \\ &= \int_0^t (a_j(s) + B(s) - 1) F_j(\d s)
\end{aligned}
\end{equation}
and
\begin{equation} \label{eq:cond_fixed_vs_constant_sum2}
 \begin{aligned}
    &\phantom{{}=}\sum_{j=1}^d \int_0^t \big(\P(C \leq t \given C \geq s) - \P(C \leq t \given \tilde T = s, \tilde D = j) \big) \tilde F_j(\d s) \\ &= \int_0^t \big( \P(\tilde T = s, \tilde D = 0 \given C = s) + \sum_{j=1}^d \int_0^t\frac{1}{K(u-)} \tilde F_j(\d u) - 1 \big) G(\d s)
\end{aligned}  
\end{equation}
hold for all $t \in \J$ for alle $j=1, \dots, d$.
\end{lemma}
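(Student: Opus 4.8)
The plan is to prove each identity by disintegrating the suitable terms over the relevant conditioning variable and then interchanging the order of integration, so that both sides collapse to probabilities of the same events.

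For \eqref{eq:cond_fixed_vs_constant_sum}, I would first note that on $\{\tilde D = 0\}$ one has $\tilde T < T$, so $\{T \leq t, D = j, \tilde D = 0\} \subseteq \{\tilde T \leq t\}$; disintegrating over $(\tilde T, \tilde D)$ then gives
\begin{equation*}
\int_0^t \P(T \leq t, D = j \given \tilde T = s, \tilde D = 0)\,\tilde F_0(\d s) = \P(T \leq t, D = j, \tilde D = 0).
\end{equation*}
For the complementary term I would insert $F_j(t \given s) = (F_j(t) - F_j(s))/S(s)$ and $B(s) = \int_0^{s-} S(u)^{-1}\,\tilde F_0(\d u)$; a Fubini step, together with the observation that the integrand $F_j(t \given s)\,\tilde F_0(\d s)$ vanishes at $s = t$, yields $\int_0^t F_j(t \given s)\,\tilde F_0(\d s) = \int_0^t B(s)\,F_j(\d s)$. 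Using $\tilde F_j(\d s) = a_j(s)\,F_j(\d s)$ on the right-hand side of \eqref{eq:cond_fixed_vs_constant_sum}, the identity then reduces to the bookkeeping relation $F_j(t) = \tilde F_j(t) + \P(T \leq t, D = j, \tilde D = 0)$, which holds because $\{T \leq t, D = j\}$ is the disjoint union of $\{T \leq t, D = j, \tilde D = j\}$ and $\{T \leq t, D = j, \tilde D = 0\}$ and, for $j \geq 1$, $\{T \leq t, D = j, \tilde D = j\} = \{\tilde T \leq t, \tilde D = j\}$.

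For \eqref{eq:cond_fixed_vs_constant_sum2} I would argue in the same spirit, now disintegrating the term $\sum_{j=1}^d \int_0^t \P(C \leq t \given \tilde T = s, \tilde D = j)\,\tilde F_j(\d s)$ on the left over $(\tilde T, \tilde D)$ and the term $\int_0^t \P(\tilde T = s, \tilde D = 0 \given C = s)\,G(\d s)$ on the right over $C$, reading the inner $j$-sum on the right as $\sum_{j=1}^d \int_0^s K(u-)^{-1}\,\tilde F_j(\d u)$. Using $\{\tilde T = s, \tilde D = j\} = \{T = s, D = j, C \geq s\}$ for $j \geq 1$ and $\{\tilde T = s, \tilde D = 0\} \cap \{C = s\} = \{T > s\} \cap \{C = s\}$, these disintegrations give $\sum_{j=1}^d \int_0^t \P(C \leq t \given \tilde T = s, \tilde D = j)\,\tilde F_j(\d s) = \P(C \leq t, T \leq C)$ and $\int_0^t \P(\tilde T = s, \tilde D = 0 \given C = s)\,G(\d s) = \P(\tilde D = 0, C \leq t)$. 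Since $G(t) = \P(C \leq t)$, cancelling the common term $\P(C \leq t, T \leq C)$ reduces \eqref{eq:cond_fixed_vs_constant_sum2} to
\begin{equation*}
\sum_{j=1}^d \int_0^t \P(C \leq t \given C \geq s)\,\tilde F_j(\d s) = \sum_{j=1}^d \int_0^t \int_0^s \frac{1}{K(u-)}\,\tilde F_j(\d u)\, G(\d s),
\end{equation*}
which follows by writing $\P(C \leq t \given C \geq s) = (G(t) - G(s-))/K(s-)$ and interchanging the order of integration on the right, both sides then equalling $\sum_{j=1}^d \int_0^t (G(t) - G(u-)) K(u-)^{-1}\,\tilde F_j(\d u)$.

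Throughout, restricting to $t \in \J$ ensures $\tilde S(s) > 0$, hence $S(s) > 0$ and $K(s-) > 0$, for every $s \leq t$ entering the integrands, so all conditional probabilities and ratios are well defined. I do not expect a conceptual difficulty here: the only real care needed is atom bookkeeping — keeping straight which iterated integrals run over half-open versus closed intervals (the $\int_0^{s-}$ in the definition of $B$ versus the $\int_0^s$ elsewhere) and checking that the resulting boundary terms cancel or vanish. Getting these details exactly right, rather than any individual step, is where the work lies.
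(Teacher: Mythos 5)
Your proof is correct and follows essentially the same route as the paper's: a Fubini interchange identifying $\int_0^t F_j(t\given s)\,\tilde F_0(\d s)$ with $\int_0^t B(s)\,F_j(\d s)$, combined with the bookkeeping decomposition $F_j(t)=\tilde F_j(t)+\P(T\leq t, D=j,\tilde D=0)$ and $\tilde F_j(\d s)=a_j(s)\,F_j(\d s)$, together with the mirror-image argument for the censoring version, which the paper only indicates as ``similar''. You are also right that the inner integral on the right-hand side of \eqref{eq:cond_fixed_vs_constant_sum2} must be read as $\int_0^s$ rather than $\int_0^t$ for the identity, and for its use in Proposition~\ref{prop:random_censoring}, to hold.
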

\begin{proof}
A change in the order of integration shows that $\int_0^t B(u) F_j(\d u) = \int_0^t F_j(t \given s) \tilde F_0(\d s)$. Split up $F_j(t) = \tilde F_j(t) + \int_0^t \P(T \leq t, D = j \given \tilde T = s, \tilde D = 0) \tilde F_0(\d s)$, where we have $\tilde F_j(t) = \int_0^t a_j(s) \d F_j(s)$, and put together to obtain \eqref{eq:cond_fixed_vs_constant_sum}.
The argument for \eqref{eq:cond_fixed_vs_constant_sum2} is similar.
\end{proof}

\section*{Appendix 2}
\subsection*{Convergence of the Aalen--Johansen estimator}

For an i.i.d.\ sample $(\tilde T_1,\tilde D_1),\ldots,(\tilde T_n,\tilde D_n)$ of $(\tilde T,\tilde D)$, we let $\hat H_{j,n}$ denote the Nelson--Aalen estimator for $H_j$ and $\hat H_n=\sum_{j=1}^d\hat H_{j,n}$. If we define the $(d+1)\times (d+1)$ matrix
\begin{align*}
    \hat{\mat{H}}_n(t)=\begin{pmatrix}-\hat H_n(t) & \hat H_{1,n}(t) & \cdots & \hat H_{d,n}(t)\\
    0 & 0 & \cdots & 0 \\
    \vdots & \vdots & \ddots & \vdots \\
    0 & 0 & \cdots & 0\end{pmatrix}
\end{align*}
then the Aalen--Johansen estimator is defined as
\begin{align}
    \label{eq:def_aj}
    \hat{\mat{P}}_n(t)=\Prodi_0^t(\identmat+\hat{\mat{H}}_n(\d s)).
\end{align}
By arguments similar to Section 4.2 of \cite{Gill1990}, we see that $\sup_{s \in [0,t]}|\hat{H}_{j,n}(s)-\tilde H_j(s)|\to 0$ almost surely for $n\to\infty$ for all $j$ and $t\in\J$ and thus also $\sup_{s \in [0,t]}\|\hat{\mat{H}}_n(s)-\tilde{\mat{H}}(s)\| \to 0$ almost surely for $n\to\infty$ for all $t\in\J$. By continuity of the product integral \citep{Gill1990} we conclude that
\begin{align}
    \label{eq:limit_aj}
    \sup_{s \in [0,t]}\|\hat{\mat{P}}_n(s)-\Prodi_0^s(\identmat+\tilde{\mat{H}}(\d u))\| \to 0
\end{align}
almost surely as $n\to\infty$ for all $t\in\J$.

The Kaplan--Meier estimator for the all-cause survival function $S$ is defined as
\begin{align}
    \label{eq:kaplanmeier}
    \hat{S}_n(t)=\Prodi_0^t(1-\hat H_n(\d s))
\end{align}
which is just entry $(1,1)$ of $\hat{\mat{P}}_n(t)$.

\section*{Appendix 3}
\subsection*{Constructing latent times}

Let us consider a probability space $(\Omega, \F, \P)$ on which random variables $\tilde T \from \Omega \to (0, \infty)$ and $\tilde D \from \Omega \to \{0, \dots, d\}$ are defined. 
We now want to extend the probability space in order to define a random variable $C$ that satisfies $C \geq \tilde T$ when $\tilde D \neq 0$ and $C = \tilde T$ when $\tilde D = 0$ and follows a certain conditional distribution given $(\tilde T, \tilde D)$. 
The desired conditional cumulative distribution function is given by $F_{C \given \tilde T, \tilde D}(c \given \tilde t, \tilde d) = \indic{\tilde t \leq c} \indic{\tilde d = 0} + (1- \prodi_{\tilde t-}^c(1 - \check H_0(\d u))) \indic{\tilde t \leq c} \indic{\tilde d \neq 0}$ where $\check H_0$ is defined in \eqref{eq:check_H0} solely based on the distribution of $(\tilde T, \tilde D)$.
For given $\tilde t$ and $\tilde d$, the function $c \mapsto F_{C \given \tilde T, \tilde D}(c \given \tilde t, \tilde d)$ is right-continuous and increasing.
For a right-continuous and increasing function $f \from \R \to \R$, the inversion, as defined in for instance Section~II.2a of \cite{asmussen2007stochastic}, given by $f^{\leftarrow}(u) = \inf\{x : f(x) \geq u\} \in \R \cup \{- \infty, \infty\}$ is a useful concept. Using right continuity and that $f$ is increasing, the conclusion that $u \leq f(x)$ if and only if $f^{\leftarrow}(u) \leq x$ can be reached.
Extend the sample space to $\Omega' = \Omega \times [0,1]$ and the $\sigma$-algebra to $\F' = \F \times \sigAlg{B}([0,1])$, where $\sigAlg{B}$ is the Borel $\sigma$-algebra, and the probability measure by $\P'(A \times (u,v]) = \P(A) (v-u)$. By these extensions, the random variable $U$ defined on the probability space $(\Omega', \F', \P')$ and given by $U(\omega') = u$ for $\omega'= (\omega, u)\in\Omega'$ follows a uniform distribution on $[0,1]$ and is independent of $(\tilde T, \tilde D)$. 
The random variable defined by $C(\omega') = F_{C \given \tilde T, \tilde D}^{\leftarrow}(u \given \tilde T(\omega), \tilde D(\omega))$ for $\omega' = (\omega, u)$, where $u \mapsto F_{C \given \tilde T, \tilde D}^{\leftarrow}(u \given \tilde t, \tilde d)$ is the inversion of $c \mapsto F_{C \given \tilde T, \tilde D}(c \given \tilde t, \tilde d)$, now fulfills $C(\omega') \geq \tilde T(\omega)$ when $\tilde D(\omega) \neq 0$ and $C(\omega') = \tilde T(\omega)$ when $\tilde D(\omega) = 0$ and has the desired conditional distribution.
Renaming $\P'$ to $\P$, we note that, for $t \leq s$,
\begin{equation} \label{eq:constr_C_vs_TD}
    \begin{aligned}
    \P(\tilde T \leq t, \tilde D = j, C > s) &= \int_0^t \P(C > s \given \tilde T = u, \tilde D = j) \tilde F_j(\d u) \\
    &= \int_0^t \Prodi_{u-}^s(1 - \check H_0(\d u)) \tilde F_j(\d u) \\
    &= \Prodi_0^s(1 - \check H_0(\d u)) \int_0^t \Prodi_0^{u-}(1 - \tilde H(\d v)) \d \tilde H_j(u),
    \end{aligned}
\end{equation}
where the last equation uses $\tilde S(s) = \prodi_0^s(1-\check H_0(\d u)) \prodi_0^s(1-\tilde H(\d u))$ and $\tilde H_j(s) = \int_0^s \tilde S(u-)^{-1} \tilde F_j(\d u)$. Since $\prodi_0^s(1-\tilde H(\d u)) + \sum_{j=1}^d \int_0^s \prodi_0^{u-} (1 - \tilde H(\d v)) \tilde H_j(\d u) = 1$, these equalities can also be used to establish that
\begin{equation} \label{eq:constr_C_dist}
\begin{aligned}
    \P(C > s) &= \tilde S(s) + \sum_{j=1}^d\P(\tilde T \leq s, \tilde D = j, C > s) \\
    &= \Prodi_0^s (1 - \check H_0(\d u)).
\end{aligned}
\end{equation}
This reveals that the constructed $C$ is proper when and only when $\prodi_0^s(1- \check H_0(\d u)) \to 0$ for $s \to \infty$. 

In a setting identical to above, we now want to construct $(T,D)$ such that $(T, D) = (\tilde T, \tilde D)$ when $\tilde D \neq 0$ and $T > \tilde T$ when $\tilde D = 0$ and such that $(T,D)$ has a certain conditional distribution given $(\tilde T, \tilde D)$. Here, the desired conditional cumulative distribution function is given by $F_{T,D \given \tilde T, \tilde D}(t,j \given \tilde t, \tilde d) = \indic{\tilde t \leq t} \indic{0 < \tilde d \leq j} + \sum_{k=1}^{j}\int_{\tilde t}^t \prodi_{\tilde t}^{s-}(1-\tilde H(\d u)) \tilde H_k(\d u) \indic{\tilde d = 0} \indic{\tilde t \leq t}$.
This can be achieved in a manner similar to above. A two step procedure is to construct $D$ according to the conditional cumulative distribution function given by $F_{D \given \tilde T, \tilde D}(j \given \tilde t, \tilde d) = F_{T,D \given \tilde T, \tilde D}(\infty,j \given \tilde t, \tilde d) (F_{T,D \given \tilde T, \tilde D}(\infty,d \given \tilde t, \tilde d))^{-1}$ and then to construct $T$ according to the conditional cumulative distribution function given by 
\begin{equation*}
    F_{T \given D, \tilde T, \tilde D}(t \given j, \tilde t, \tilde d) = \frac{F_{T,D \given \tilde T, \tilde D}(t,j \given \tilde t, \tilde d) - F_{T,D \given \tilde T, \tilde D}(t,j - 1 \given \tilde t, \tilde d)}{F_{D \given \tilde T, \tilde D}(j \given \tilde t, \tilde d) - F_{D \given \tilde T, \tilde D}(j-1 \given \tilde t, \tilde d)}
\end{equation*}
where division by 0 can be taken to produce 0.
The so constructed pair $(T, D)$ satisfies in particular $\P(T \leq t, D = j \given \tilde T = s, \tilde D = 0) = \int_s^{t} \prodi_s^{u-}(1- \tilde H(\d v)) \tilde H_j(\d u)$ for $t \geq \tilde t$ and $s \in \J$ for $j=1, \dots, d$ and so also
\begin{equation*}
\begin{aligned}
    \P(T \leq t, D = j) &= \tilde F_j(t) + \int_0^t \P(T \leq t, D = j \given \tilde T = s, \tilde D = 0) \tilde F_0(\d s) \\
    &= \int_0^t \tilde S(u-) \tilde H_j(\d u) + \int_0^t \int_s^t \Prodi_s^{u-}(1- \tilde H(\d v)) \tilde H_j(\d u) \tilde F_0(\d s) \\
    &=\int_0^t\big( \tilde S(u-) +\int_0^{u-} \Prodi_s^{u-}(1-\tilde H(\d v)) \tilde F_0(\d s) \big) \tilde H_j(\d u) \\
    &= \int_0^t \Prodi_0^{u-}(1-\tilde H(\d v)) \big( \Prodi_0^{u-}(1- \check H_0(\d v)) + \int_0^{u-} \Prodi_0^{s-}(1-\check H_0(\d v)) \check H_0(\d s) \big) \tilde H_j(\d u) \\
    &= \int_0^t \Prodi_0^{u-}(1-\tilde H(\d v)) \tilde H_j(\d u).
\end{aligned}
\end{equation*}

These constructions lead to the following proposition, which has similarities to Theorem~2 of \cite{tsiatis1975nonidentifiability}.
\begin{proposition}\label{prop:existence}
Given positive random variable $\tilde T$ and random variable $\tilde D$ with values in $\{0, \dots, d\}$, positive random variables $T$ and $C$ and random variable $D$ with values in $\{1,\ldots,d\}$ exist such that $\tilde T=T\wedge C$, $\tilde D=D \indic{T \leq C}$ and such that $(T, D)$ is independent of $C$.
\end{proposition}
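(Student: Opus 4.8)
The plan is to prove Proposition~\ref{prop:existence} by combining the two explicit constructions given in Appendix~3 with the equivalences of Proposition~\ref{prop:strong_censoring}. First I would note that the construction of $(T, D)$ from $(\tilde T, \tilde D)$ carried out above (on a suitably enlarged probability space) produces a pair satisfying $\tilde T = T \wedge \tilde T$-type relations with $(T,D) = (\tilde T, \tilde D)$ on $\{\tilde D \neq 0\}$ and $T > \tilde T$ on $\{\tilde D = 0\}$, together with the key identity $\P(T \leq t, D = j \given \tilde T = s, \tilde D = 0) = \int_s^t \prodi_s^{u-}(1 - \tilde H(\d v)) \tilde H_j(\d u)$ for $s \in \J$, which was verified by direct computation ending in $\P(T \leq t, D = j) = \int_0^t \prodi_0^{u-}(1 - \tilde H(\d v)) \tilde H_j(\d u)$. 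With this $(T,D)$ now fixed, the observed exit data it generates is precisely the original $(\tilde T, \tilde D)$, and the cumulative hazards $H_j$ associated with this $(T,D)$ can be read off from the displayed formula, giving $H_j = \tilde H_j$ on $\J$, i.e.\ the identifiability property~\ref{it:Lambda} holds automatically.

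The second step is to verify that this constructed $(T,D)$ in fact satisfies the representativity property of Proposition~\ref{prop:strong_censoring}. The natural route is property~\ref{it:representation_cond_fixed}, non-prognostic censoring: one must check that $\P(T \leq t, D = j \given \tilde T = s, \tilde D = 0)$ equals $\P(T \leq t, D = j \given T > s)$ for $\tilde F_0$-almost all $s \in \J$. Since identity of forces of mortality holds, $H = \tilde H$ on $\J$, so $\P(T \leq t, D = j \given T > s) = \int_s^t \prodi_s^{u-}(1 - H(\d v)) H_j(\d u) = \int_s^t \prodi_s^{u-}(1 - \tilde H(\d v)) \tilde H_j(\d u)$, which is exactly the conditional-on-censoring expression above; hence \ref{it:representation_cond_fixed} holds and, by Proposition~\ref{prop:strong_censoring}, so does \ref{it:C_exists}.

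At this point the third step invokes \ref{it:C_exists}: there exists a censoring time $C$ with $\tilde T = T \wedge C$ and $C \independent (T, D)$. Concretely, $C$ is the one constructed in the first part of Appendix~3 (built from $\check H_0$), which was shown there to satisfy $\tilde T = T \wedge C$ and, by the calculations in the proof of Proposition~\ref{prop:strong_censoring} using equations~\eqref{eq:constr_C_vs_TD} and~\eqref{eq:constr_C_dist}, to be independent of $(T,D)$. Both constructions live on the product extension $\Omega \times [0,1] \times [0,1]$ (one coordinate for the $U$ randomizing $C$, one for the pair randomizing $(T,D)$), and one checks they do not interfere: $C$ is a function of $(\tilde T, \tilde D)$ and the first auxiliary uniform, while $(T,D)$ uses $(\tilde T, \tilde D)$ and the second, so the independence argument for $C$ versus $(T,D)$ goes through verbatim. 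Finally one confirms the compatibility relation $\tilde D = D \indic{T \leq C}$: on $\{\tilde D \neq 0\}$ we have $T = \tilde T \leq C$ and $D = \tilde D$, while on $\{\tilde D = 0\}$ we have $C = \tilde T < T$, so $\indic{T \leq C} = 0$ and $D \indic{T \leq C} = 0 = \tilde D$.

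The main obstacle I anticipate is the bookkeeping around the enlarged probability space: one must be careful that the two auxiliary uniforms are jointly independent of $(\tilde T, \tilde D)$ and of each other, so that conditioning statements like ``$C \independent (T,D)$'' are legitimate, and that the conditional distributions specified for $C$ given $(\tilde T, \tilde D)$ and for $(T,D)$ given $(\tilde T,\tilde D)$ are mutually consistent with the already-verified joint law. The analytic heart — checking that the constructed $(T,D)$ satisfies non-prognostic censoring — is essentially immediate once one observes, as above, that identity of forces of mortality forces $\prodi_s^{u-}(1 - H(\d v)) = \prodi_s^{u-}(1 - \tilde H(\d v))$; so the real work is the measure-theoretic scaffolding, which is exactly what Appendix~3 has been set up to supply.
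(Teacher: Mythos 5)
Your proposal is correct and follows essentially the same route as the paper's own proof: both rely on the Appendix~3 constructions of $(T,D)$ and $C$, observe that non-prognostic censoring holds by construction because identity of forces of mortality makes $\prodi_s^{u-}(1-H(\d v))\,H_j(\d u)$ and $\prodi_s^{u-}(1-\tilde H(\d v))\,\tilde H_j(\d u)$ coincide, and then obtain $C \independent (T,D)$ through Proposition~\ref{prop:strong_censoring}. The only cosmetic difference is that you cite \ref{it:C_exists} as a black box (plus an explicit check of $\tilde D = D\indic{T\leq C}$) where the paper re-runs the explicit computation of $\P(T\in(s,t],\,D=j,\,C>s)$ via \eqref{eq:constr_C_vs_TD}, \eqref{eq:constr_C_dist}, and \eqref{eq:productstruc_stilde}.
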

\begin{proof}
Construct $C$, $T$ and $D$ as described above. The distribution of $C$ is given by $\P(C > t) = \prodi_0^t(1- \check H_0(\d s))$ and the distribution of $(T, D)$ is given by $\P(T \leq t, D=j) = \int_0^t \prodi_0^{s-}(1-\tilde H(\d u)) \tilde H_j(\d s)$. Equation~\eqref{eq:constr_C_vs_TD} reveals that, for $t \leq s$, $\P(T \leq t, D = j, C > s) = \P(\tilde T \leq t, \tilde D = j, C > s) = \P(C > s) \P(T \leq t, D = j)$.
By construction we have, for $t > s$, $\P(T \leq t, D = j \given \tilde T = s, \tilde D = 0) = \P(T \leq t, D = j \given T > s)$. According to Proposition~\ref{prop:strong_censoring}, this implies $\P(T \leq t, D = j \given \tilde T > s) = \P(T \leq t, D = j \given T > s)$ for all $t \geq 0$ and $s \in \J$. 
Using this and \eqref{eq:productstruc_stilde}, we have, for $t > s$ with $s \in \J$,
\begin{equation*}
\begin{aligned}
    \P(T \in (s,t], D = j, C > s) &= \P(T \leq t, D = j \given \tilde T > s) \P(\tilde T > s) \\
    &= \int_s^t \Prodi_s^{u-}(1 - \tilde H(\d v)) \tilde H_j(\d u) \Prodi_0^s(1- \tilde H(\d u)) \Prodi_0^s(1- \check H_0(\d u)) \\
    &= \int_s^t \Prodi_0^{u-}(1 - \tilde H(\d v)) \tilde H_j(\d u)  \Prodi_0^s(1- \check H_0(\d u)) \\
    &= \P(T \in (s,t], D = j) \P(C > s).
\end{aligned}
\end{equation*}
Both sides are 0 if $s \in (0, \infty)\backslash \J$. We have thereby established that $\P(T \leq t, D = j, C > s) = \P(T \leq t, D = j) \P(C > s)$ for all $t, s \geq 0$ and $j = 1, \dots, d$, and so $(T,D)$ and $C$ are independent.
\end{proof}


\begin{thebibliography}{}

\bibitem[\protect\citename{Aalen \& Johansen, }1978]{aalen1978empirical}
Aalen, OO, \& Johansen, S. 1978.
\newblock An empirical transition matrix for non-homogeneous Markov chains
  based on censored observations.
\newblock {\em Scandinavian Journal of Statistics},  141--150.

\bibitem[\protect\citename{Andersen \& Keiding, }2006]{Keiding2006}
Andersen, PK, \& Keiding, N. 2006.
\newblock {\em Survival and event history analysis}.
\newblock Wiley.

\bibitem[\protect\citename{Andersen {et~al.}, }1993]{Andersen1993}
Andersen, PK, Borgan, {\O}, Gill, RD, \& Keiding, N. 1993.
\newblock {\em Statistical models based on counting processes}.
\newblock Springer Series in Statistics.
\newblock Springer-Verlag, New York.

\bibitem[\protect\citename{Asmussen \& Glynn, }2007]{asmussen2007stochastic}
Asmussen, S, \& Glynn, PW. 2007.
\newblock {\em Stochastic simulation: algorithms and analysis}.
\newblock  Vol. 57.
\newblock Springer Science \& Business Media.

\bibitem[\protect\citename{Ebrahimi {et~al.}, }2003]{ebrahimi2003}
Ebrahimi, N, Molefe, D, \& Ying, Z. 2003.
\newblock Identifiability and censored data.
\newblock {\em Biometrika}, {\bf 90}(3), 724--727.

\bibitem[\protect\citename{Elandt-Johnson, }1976]{elandt1976conditional}
Elandt-Johnson, RC. 1976.
\newblock Conditional failure time distributions under competing risk theory
  with dependent failure times and proportional hazard rates.
\newblock {\em Scandinavian Actuarial Journal}, {\bf 1976}(1), 37--51.

\bibitem[\protect\citename{Fleming \& Harrington, }1991]{fleming1991}
Fleming, TR, \& Harrington, DP. 1991.
\newblock {\em Counting Processes and Survival Analysis}.
\newblock John Wiley \& Sons.

\bibitem[\protect\citename{Gail, }1975]{gail1975review}
Gail, M. 1975.
\newblock A review and critique of some models used in competing risk analysis.
\newblock {\em Biometrics},  209--222.

\bibitem[\protect\citename{Gill, }1980]{Gill1980}
Gill, RD. 1980.
\newblock {\em Censoring and stochastic integrals}.
\newblock Mathematical Centre Tracts, vol. 124.
\newblock Mathematisch Centrum, Amsterdam.

\bibitem[\protect\citename{Gill \& Johansen, }1990]{Gill1990}
Gill, RD, \& Johansen, S. 1990.
\newblock A survey of product-integration with a view toward application in
  survival analysis.
\newblock {\em Ann. Statist.}, {\bf 18}(4), 1501--1555.

\bibitem[\protect\citename{Jacobsen, }1989]{jacobsen1989right}
Jacobsen, M. 1989.
\newblock Right censoring and martingale methods for failure time data.
\newblock {\em The Annals of Statistics},  1133--1156.

\bibitem[\protect\citename{Kalbfleisch \& MacKay,
  }1979]{kalbfleisch1979constant}
Kalbfleisch, J, \& MacKay, R. 1979.
\newblock On constant-sum models for censored survival data.
\newblock {\em Biometrika}, {\bf 66}(1), 87--90.

\bibitem[\protect\citename{Kalbfleisch \& Prentice,
  }1980]{kalbfleisch1980statistical}
Kalbfleisch, JD, \& Prentice, RL. 1980.
\newblock {\em The statistical analysis of failure time data}.
\newblock John Wiley \& Sons.

\bibitem[\protect\citename{Kaplan \& Meier, }1958]{kaplan1958nonparametric}
Kaplan, EL, \& Meier, P. 1958.
\newblock Nonparametric estimation from incomplete observations.
\newblock {\em Journal of the American Statistical Association}, {\bf 53}(282),
  457--481.

\bibitem[\protect\citename{Lagakos, }1979]{lagakos1979general}
Lagakos, SW. 1979.
\newblock General right censoring and its impact on the analysis of survival
  data.
\newblock {\em Biometrics},  139--156.

\bibitem[\protect\citename{Martinussen \& Scheike,
  }2007]{martinussen2007dynamic}
Martinussen, T, \& Scheike, TH. 2007.
\newblock {\em Dynamic regression models for survival data}.
\newblock Springer Science \& Business Media.

\bibitem[\protect\citename{Molenberghs {et~al.}, }2008]{molenberghs2008}
Molenberghs, G, Beunckens, C, Sotto, C, \& Kenward, MG. 2008.
\newblock Every missingness not at random model has a missingness at random
  counterpart with equal fit.
\newblock {\em Journal of the Royal Statistical Society: Series B (Statistical
  Methodology)},  371--388.

\bibitem[\protect\citename{Tsiatis, }1975]{tsiatis1975nonidentifiability}
Tsiatis, A. 1975.
\newblock A nonidentifiability aspect of the problem of competing risks.
\newblock {\em Proceedings of the National Academy of Sciences}, {\bf 72}(1),
  20--22.

\bibitem[\protect\citename{Williams \& Lagakos, }1977]{williams1977models}
Williams, J, \& Lagakos, S. 1977.
\newblock Models for censored survival analysis: Constant-sum and variable-sum
  models.
\newblock {\em Biometrika}, {\bf 64}(2), 215--224.

\end{thebibliography}
\end{document}